\documentclass{amsart}

\usepackage[T1]{fontenc}
\usepackage{amssymb}
\usepackage{enumitem}
\usepackage{mathrsfs}
\usepackage[colorlinks, linkcolor=blue, citecolor=blue, urlcolor=blue]{hyperref}
\usepackage{tikz}

\newtheorem{theorem}{Theorem}[section]
\newtheorem{corollary}[theorem]{Corollary}
\newtheorem{lemma}[theorem]{Lemma}

\theoremstyle{definition}
\newtheorem{definition}[theorem]{Definition}

\frenchspacing

\newcommand{\rstr}{{\upharpoonright}}

\DeclareMathOperator{\scrP}{\mathscr{P}}
\DeclareMathOperator{\dom}{dom}
\DeclareMathOperator{\ran}{ran}
\DeclareMathOperator{\rank}{rank}

\title{A Note on Surjective Cardinals}

\author{Jiaheng Jin}
\address{School of Philosophy\\
Wuhan University\\
No.~299 Bayi Road\\
Wuhan 430072\\
Hubei Province\\
People's Republic of China}
\email{jin\_jiaheng@outlook.com}

\author{Guozhen Shen}
\address{Department of Philosophy (Zhuhai)\\
Sun Yat-sen University\\
No.~2 Daxue Road\\
Zhuhai 519082\\
Guangdong Province\\
People's Republic of China}
\email{shen\_guozhen@outlook.com}

\subjclass[2020]{Primary 03E10; Secondary 03E25, 03E35}

\keywords{surjective cardinal, cardinal algebra, surjective cardinal algebra, axiom of choice}

\begin{document}

\begin{abstract}
For cardinals $\mathfrak{a}$ and $\mathfrak{b}$, we write $\mathfrak{a}=^\ast\mathfrak{b}$ if
there are sets $A$ and $B$ of cardinalities $\mathfrak{a}$ and $\mathfrak{b}$, respectively,
such that there are partial surjections from $A$ onto $B$ and from $B$ onto $A$.
$=^\ast$-equivalence classes are called surjective cardinals. In this article,
we show that $\mathsf{ZF}+\mathsf{DC}_\kappa$, where $\kappa$ is a fixed aleph,
cannot prove that surjective cardinals form a cardinal algebra,
which gives a negative solution to a question proposed by Truss [J. Truss, Ann. Pure Appl. Logic 27, 165--207 (1984)].
Nevertheless, we show that surjective cardinals form a ``surjective cardinal algebra'',
whose postulates are almost the same as those of a cardinal algebra,
except that the refinement postulate is replaced by the finite refinement postulate.
This yields a smoother proof of the cancellation law for surjective cardinals,
which states that $m\cdot\mathfrak{a}=^\ast m\cdot\mathfrak{b}$ implies $\mathfrak{a}=^\ast\mathfrak{b}$
for all cardinals $\mathfrak{a},\mathfrak{b}$ and all nonzero natural numbers $m$.
\end{abstract}

\maketitle

\section{Introduction and definitions}

The notion of a cardinal algebra, initiated by Tarski in his masterful book~\cite{Tarski1949b},
provides a common generalization for a number of important mathematical structures:
nonnegative real numbers under addition, sets of nonnegative measurable functions and countably additive measures
on a measurable space under pointwise summation, sets of Borel isomorphism types under Borel sum, and so forth.

A \emph{cardinal algebra} is an algebraic system $\langle A,{+},{\sum}\rangle$ which satisfies the following postulates I--VII.
\begin{description}
  \item[I (Finite closure postulate)] If $a,b\in A$, then $a+b\in A$.
  \item[II (Infinite closure postulate)] If $a_n\in A$ for all $n\in\omega$, then $\sum_{n\in\omega}a_n\in A$.
  \item[III (Associative postulate)] If $a_n\in A$ for all $n\in\omega$, then
  \[
  \sum_{n\in\omega}a_n=a_0+\sum_{n\in\omega}a_{n+1}.
  \]
  \item[IV (Commutative-associative postulate)] If $a_n,b_n\in A$ for all $n\in\omega$, then
  \[
  \sum_{n\in\omega}(a_n+b_n)=\sum_{n\in\omega}a_n+\sum_{n\in\omega}b_n.
  \]
  \item[V (Postulate of the zero element)] There is an element $0\in A$ such that $a+0=0+a=a$ for all $a\in A$.
  \item[VI (Refinement postulate)] If $a,b,c_n\in A$ for all $n\in\omega$ and $a+b=\sum_{n\in\omega}c_n$,
  then there are elements $a_n,b_n\in A$ for each $n\in\omega$ such that
  \[
  a=\sum_{n\in\omega}a_n,\quad b=\sum_{n\in\omega}b_n,\quad\text{and}\quad c_n=a_n+b_n\text{ for all }n\in\omega.
  \]
  \item[VII (Remainder postulate)]If $a_n,b_n\in A$ and $a_n=a_{n+1}+b_n$ for all $n\in\omega$,
  then there is an element $c\in A$ such that
  \[
  a_m=c+\sum_{n\in\omega}b_{m+n}\text{ for all }m\in\omega.
  \]
\end{description}
It is clear that, assuming the countable axiom of choice $\mathsf{AC}_\omega$, cardinals form a cardinal algebra.

Let $\kappa$ be an aleph. Recall the principle of $\kappa$-dependent choices $\mathsf{DC}_\kappa$.
\begin{description}
  \item[$\mathsf{DC}_\kappa$] Let $S$ be a set and let $R$ be a binary relation such that
  for each $\alpha<\kappa$ and each $\alpha$-sequence $s=\langle x_\xi\rangle_{\xi<\alpha}$ of elements of $S$
  there is $y\in S$ such that $sRy$. Then there is a function $f:\kappa\to S$ such that $(f\rstr\alpha)Rf(\alpha)$ for every $\alpha<\kappa$.
\end{description}
It is shown in \cite[Corollary~2.34]{Tarski1949b} that, assuming $\mathsf{DC}_\omega$,
in any cardinal algebra $\langle A,{+},{\sum}\rangle$, if $a,b\in A$ and $m\in\omega\setminus\{0\}$,
then $m\cdot a=m\cdot b$ implies $a=b$. This yields a choice-free proof of the celebrated Bernstein division theorem,
which states that $m\cdot\mathfrak{a}=m\cdot\mathfrak{b}$ implies $\mathfrak{a}=\mathfrak{b}$
for all cardinals $\mathfrak{a},\mathfrak{b}$ and all nonzero natural numbers $m$.
(Although $\mathsf{DC}_\omega$ is needed in the \emph{algebraic} proof of \cite[Corollary~2.34]{Tarski1949b},
as remarked by Tarski~\cite[pp.~240--242]{Tarski1949b}, in cardinal \emph{arithmetic},
we can dispense with the use of $\mathsf{DC}_\omega$ in the proof of the Bernstein division theorem.)

``Weak cardinal algebras'' were introduced by Truss~\cite{Truss1973} in an attempt to derive
as many properties of cardinal algebras as possible using only finitary addition $+$.
The infinitary defining postulates of a cardinal algebra were replaced by the following
``finite refinement'' and ``approximate cancellation'' postulates.
\begin{description}
  \item[VI' (Finite refinement postulate)] If $a_1,a_2,b_1,b_2\in A$ and $a_1+a_2=b_1+b_2$,
  then there are elements $c_1,c_2,c_3,c_4\in A$ such that $a_1=c_1+c_2$, $a_2=c_3+c_4$,
  $b_1=c_1+c_3$, and $b_2=c_2+c_4$.
  \item[VIII (Approximate cancellation postulate)]If $a,b,c\in A$ and $a+c=b+c$,
  then there are elements $a',b',d\in A$ such that $a=a'+d$, $b=b'+d$, and $c=a'+c=b'+c$.
\end{description}
By \cite[Theorems~2.3 and~2.6]{Tarski1949b}, the postulates VI' and VIII hold in any cardinal algebra $\langle A,{+},{\sum}\rangle$,
so every cardinal algebra is a weak cardinal algebra. It is shown in \cite[Section~6]{Truss1984} that
there is a weak cardinal algebra for which the cancellation law ``$2\cdot a=2\cdot b$ implies $a=b$'' fails.

For cardinals $\mathfrak{a}$ and $\mathfrak{b}$, we write $\mathfrak{a}=^\ast\mathfrak{b}$ if
there are sets $A$ and $B$ of cardinalities $\mathfrak{a}$ and $\mathfrak{b}$, respectively,
such that there are partial surjections from $A$ onto $B$ and from $B$ onto $A$.
A \emph{surjective cardinal} is an equivalence class of cardinals under $=^\ast$.
Since this may be a proper class, we may employ ``Scott's trick'' to ensure that
the equivalence class is actually a set, namely
\[
[\mathfrak{a}]=\{\mathfrak{b}\mid\mathfrak{a}=^\ast\mathfrak{b}
\wedge\forall\mathfrak{c}(\mathfrak{a}=^\ast\mathfrak{c}\rightarrow\rank(\mathfrak{b})\leqslant\rank(\mathfrak{c}))\}.
\]
Surjective cardinals may alternatively be defined as Scott equivalence classes of sets under the relation $\approx^\ast$:
$A\approx^\ast B$ if there are partial surjections from $A$ onto $B$ and from $B$ onto $A$.
It is shown in \cite[Theorem~2.7]{Truss1984} that surjective cardinals form a weak cardinal algebra,
and in \cite[Corollary~3.7]{Truss1984} that the cancellation law for surjective cardinals holds,
that is, $m\cdot\mathfrak{a}=^\ast m\cdot\mathfrak{b}$ implies $\mathfrak{a}=^\ast\mathfrak{b}$
for all cardinals $\mathfrak{a},\mathfrak{b}$ and all nonzero natural numbers $m$.

It is asked by Truss (see \cite[p.~179]{Truss1984} or \cite[p.~604]{Truss1990})
whether surjective cardinals form a cardinal algebra. Of course, if the axiom of choice is assumed,
then surjective cardinals are essentially the same as cardinals and hence form a cardinal algebra.
So, this question makes sense only in the absence of the axiom of choice.
In this article, we give a negative solution to this question by showing that
$\mathsf{ZF}+\mathsf{DC}_\kappa$, where $\kappa$ is a fixed aleph,
cannot prove that surjective cardinals form a cardinal algebra.

Nevertheless, we improve Truss's result by showing that surjective cardinals form a \emph{surjective cardinal algebra},
which is by definition an algebraic system $\langle A,{+},{\sum}\rangle$ satisfying the postulates I--VII, with VI replaced by VI'.
``Surjective cardinal algebras'' were introduced simultaneously and independently by K.~P.~S.~Bhaskara Rao and R.~M.~Shortt
on the one hand, and by F.~Wehrung on the other hand in \cite{Rao1992,Wehrung1992}.
They call such algebras ``weak cardinal algebras''. Since the term ``weak cardinal algebra'' was already used by Truss
for a different kind of algebra, we use the term ``surjective cardinal algebra'' here.
Note that, assuming $\mathsf{DC}_\omega$, for surjective cardinal algebras,
the cancellation law ``$m\cdot a=m\cdot b$ implies $a=b$ for $m\in\omega\setminus\{0\}$''
already holds (see \cite[p.~157]{Rao1992} or \cite[Proposition~2.9]{Wehrung1992}).
So, our result also yields a choice-free proof of the cancellation law for surjective cardinals
(by the device discussed in \cite[pp.~240--242]{Tarski1949b} or \cite[p.~166]{Truss1984}).

The article is organized as follows. In the next section, we show that, assuming $\mathsf{AC}_\omega$,
surjective cardinals form a surjective cardinal algebra. In the third section,
we show that surjective cardinals may not form a cardinal algebra, even if $\mathsf{DC}_\kappa$ is assumed.
In the last section, we conclude the article by some remarks.

\section{Surjective cardinals form a surjective cardinal algebra}
Truss has already shown that surjective cardinals form a weak cardinal algebra (see \cite[Theorem~2.7]{Truss1984}),
and that a weaker version of the remainder postulate holds (see \cite[Lemma~3.3]{Truss1984}).
So, we only need to prove the full remainder postulate.
However, for the convenience of the reader, we shall present here a complete proof that
surjective cardinals form a surjective cardinal algebra.

To produce choice-free proofs in cardinal arithmetic,
we frequently use expressions like ``one can explicitly define'' in our formulations.
For example, when we state the Cantor--Bernstein theorem as
``from injections $f:A\to B$ and $g:B\to A$, one can explicitly define a bijection $h:A\to B$'',
we mean that one can define a class function $H$ without free variables such that,
whenever $f$ is an injection from $A$ into $B$ and $g$ is an injection from $B$ into $A$,
$H(f,g)$ is defined and is a bijection between $A$ and $B$.

\begin{lemma}\label{sh02}
From a set $A$ and two families $\langle B_n\rangle_{n\in\omega}$ and $\langle f_n\rangle_{n\in\omega}$
such that $A\cap B_n=\varnothing$ and $f_n:A\to A\cup B_n$ is a partial surjection for all $n\in\omega$,
one can explicitly define a partial surjection $g:A\to A\cup\bigcup_{n\in\omega}B_n$.
\end{lemma}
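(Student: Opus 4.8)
The plan is to absorb the sets $B_n$ into $A$ one at a time and then pass to a limit. The basic move is this: if $h\colon A\to A\cup C$ is a partial surjection (with $C$ disjoint from $A$) and $f\colon A\to A\cup B$ is a partial surjection (with $B$ disjoint from both $A$ and $C$), then applying $f$ on $h^{-1}(A)$ and leaving $h$ unchanged on $h^{-1}(C)$ produces a partial surjection $A\to A\cup B\cup C$. Iterating this move with $f=f_n$ gives, by recursion on $N$, partial surjections onto $A\cup\bigcup_{k\le N}B_k$. The clean way to package the limit is by orbits: for $x\in A$ follow $x\mapsto f_0(x)\mapsto f_1(f_0(x))\mapsto\cdots$, applying $f_n$ at the $n$-th step while the current value lies in $A$, and let $g(x)$ be the first value that drops into some $B_n$.

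Checking that $g$ is onto $\bigcup_{n\in\omega}B_n$ is routine: given $b\in B_n$, surjectivity of $f_n$ gives a point of $A$ mapping to $b$, and pulling it back successively through the onto maps $f_{n-1},\dots,f_0$ yields an $x$ whose orbit first leaves $A$ exactly at stage $n$, with value $b$; since the $B_k$ are disjoint from $A$, the orbit cannot have exited earlier, so $g(x)=b$.

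I expect the main obstacle to be covering the copy of $A$ inside the target, rather than covering the $B_n$. The orbit map above outputs only elements of $\bigcup_n B_n$: the points whose orbit stays in $A$ forever receive no value, and a point spent routing to some $B_n$ is not itself hit, so nothing forces $A$ into the range. This is the genuine content of the lemma, and it is where the infinitary nature bites, since a single surjection $A\to A\cup B$ need not by itself yield a surjection onto $A$ together with infinitely many further disjoint pieces. To get around this I would keep a surjection onto $A$ alive throughout the iteration: use $f_0$ to split off $D=f_0^{-1}(A)$, on which $f_0$ restricts to a surjection onto $A$ and whose complement in $A$ is nonempty, dedicate a surjection onto $A$ built from $D$ to covering the target copy of $A$, and run the absorption of the $B_n$ on a complementary region, arranging the recursion so that the two systems of domains stay disjoint. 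The delicate point will be ensuring that the dedicated surjection onto $A$ is not eaten up by the infinitely many absorption steps; here I would lean on the observation that the $f_n$ already force a partial surjection $A\to\omega$ once some $B_n$ is nonempty (the all-empty case being immediate), so that $A$ has room to spare, and on the countable choice available in this section to select the countably many splittings coherently.
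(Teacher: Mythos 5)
Your finite absorption step and your orbit map are fine, and you have diagnosed the real difficulty correctly: the orbit map covers $\bigcup_{n\in\omega}B_n$, but nothing covers the target copy of $A$, and in particular nothing covers the points consumed by the absorption. The repair you sketch, however, does not work. First, the ``dedicated surjection onto $A$ built from $D=f_0^{-1}[A]$, kept disjoint from the absorption region'' cannot be arranged: under your orbit disjointification, every point whose orbit exits into $B_n$ with $n\geqslant 1$ first passes through $f_0$ into $A$, i.e., lies in $D$; and if you instead try to route the absorption of $B_1,B_2,\dots$ outside $D$, it must live inside $f_0^{-1}[B_0]\cup(A\setminus\dom(f_0))$, which can be a single point (take $f_0$ total and $B_0$ a singleton), so the two systems of domains cannot be kept disjoint in general. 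Second, the appeal to countable choice is not available: the lemma asserts that $g$ can be \emph{explicitly defined}, and this is essential, since the lemma feeds (through Lemma~\ref{sh12}) into the choice-free proof of the cancellation law (Corollary~\ref{sh26}); a proof of mere existence from $\mathsf{AC}_\omega$ proves a strictly weaker statement. Moreover, choice would not close the structural gap anyway: your stage-$N$ approximations keep rewriting their values on the part of $A$ that is currently covering $A$ (that part is the preimage of $A$ under the $N$-th composite, which is modified again at stage $N+1$), so they have no limit there; and your observation that the $f_n$ yield a partial surjection from $A$ onto $\omega$ is true but supplies ``room'' only in a cardinality sense, not a construction of the missing covering.

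What is missing is a shift (Hilbert hotel) mechanism that makes the covering of the consumed points self-sustaining, and this is exactly what the paper does. Scheduling compositions of the $f_n$ by the Cantor pairing $\pi$, one defines partial surjections $h_{\pi(m,n)+1}:A\to A\cup B_n$ such that each later composite factors through each earlier one via a function whose domain lies in $A$; hence the sets $C_{m,n}=h_{\pi(m,n)+1}^{-1}[B_n]$ are pairwise disjoint over \emph{all} pairs $(m,n)$, because $\pi(m,n)<\pi(m',n')$ gives $C_{m',n'}\subseteq h_{\pi(m,n)+1}^{-1}[A]$, which is disjoint from $C_{m,n}$ since $A\cap B_n=\varnothing$. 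One then checks $h_{\pi(0,n)+1}[C_{0,n}]=B_n$ and $h_{\pi(m+1,n)}[C_{m+1,n}]=C_{m,n}$, and defines $g$ to shift along each chain $\dots\to C_{1,n}\to C_{0,n}\to B_n$ while acting as the identity on $A\setminus\bigcup_{m,n}C_{m,n}$. Thus $A$ is covered not by a reserved region but by arranging that every set of points used up in the covering is itself the surjective image of a fresh, disjoint set of used points, with the identity handling everything untouched. Any correct completion of your plan would, in effect, have to reinvent this chain construction, so as it stands your proposal has a genuine gap precisely at the point you flagged as delicate.
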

\begin{proof}
Let $\pi$ be the Cantor pairing function, that is,
the bijection between $\omega\times\omega$ and $\omega$ defined by
\[
\pi(m,n)=\frac{(m+n)(m+n+1)}{2}+m.
\]
Define by recursion
\begin{align*}
h_0              & =\mathrm{id}_A,\\
h_{\pi(0,n)+1}   & =f_n\circ h_{\pi(0,n)},\\
h_{\pi(m+1,n)+1} & =h_{\pi(m,n)+1}\circ h_{\pi(m+1,n)}.
\end{align*}
An easy induction shows that, for all $m,n\in\omega$, $h_{\pi(m,n)+1}$ is a partial surjection from $A$ onto $A\cup B_n$.
For all $m,n\in\omega$, let
\[
C_{m,n}=h_{\pi(m,n)+1}^{-1}[B_n].
\]
An easy induction shows that, for all $k,l\in\omega$ with $k<l$, $h_l=h\circ h_k$ for some function $h$ with $\dom(h)\subseteq A$.
For all $m,n,m',n'$ with $\pi(m,n)<\pi(m',n')$, there is a function $h$ with $\dom(h)\subseteq A$
such that $h_{\pi(m',n')+1}=h\circ h_{\pi(m,n)+1}$, and hence
\[
C_{m',n'}=h_{\pi(m',n')+1}^{-1}[B_{n'}]=h_{\pi(m,n)+1}^{-1}[h^{-1}[B_{n'}]]\subseteq h_{\pi(m,n)+1}^{-1}[A],
\]
which implies $C_{m',n'}\cap C_{m,n}=\varnothing$ since $A\cap B_n=\varnothing$. Since for all $m,n\in\omega$ we have
\begin{align*}
h_{\pi(0,n)+1}[C_{0,n}]   & =B_n,\\
h_{\pi(m+1,n)}[C_{m+1,n}] & =h_{\pi(m+1,n)}[h_{\pi(m+1,n)+1}^{-1}[B_n]]=h_{\pi(m,n)+1}^{-1}[B_n]=C_{m,n},
\end{align*}
it is sufficient to define
\[
g=\bigcup_{n\in\omega}\bigl(h_{\pi(0,n)+1}\rstr C_{0,n}\bigr)\cup\bigcup_{m,n\in\omega}\bigl(h_{\pi(m+1,n)}\rstr C_{m+1,n}\bigr)
\cup\mathrm{id}_{A\setminus\bigcup_{m,n\in\omega}C_{m,n}}.\qedhere
\]
\end{proof}

\begin{corollary}\label{sh03}
From a set $A$ and a function $f$ such that $A\subseteq f[A]$,
one can explicitly define a partial surjection $g:A\to\bigcup_{n\in\omega}f^n[A]$.
\end{corollary}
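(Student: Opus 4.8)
The plan is to reduce this directly to Lemma~\ref{sh02} by manufacturing appropriate sets $B_n$ and partial surjections $f_n$ from the given data $(A,f)$. The crucial preliminary observation is that the hypothesis $A\subseteq f[A]$ forces the sequence $\langle f^n[A]\rangle_{n\in\omega}$ to be increasing: applying the image operation to $A\subseteq f[A]$ gives $f[A]\subseteq f^2[A]$, and an easy induction yields $f^n[A]\subseteq f^{n+1}[A]$ for every $n$. In particular $A\subseteq f^{n+1}[A]$ for all $n$, so $\bigcup_{n\in\omega}f^n[A]=A\cup\bigcup_{n\in\omega}f^{n+1}[A]$, and this last expression is exactly the shape of target set that Lemma~\ref{sh02} can produce.

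Accordingly, I would set $B_n=f^{n+1}[A]\setminus A$ for each $n\in\omega$. Then $A\cap B_n=\varnothing$ by construction, and since $A\subseteq f^{n+1}[A]$ we get $A\cup B_n=f^{n+1}[A]$. For the maps, I would take $f_n$ to be the $(n+1)$-fold composition of $f$ restricted to $A$. A short induction shows that the iterated-image set $f^{n+1}[A]$ coincides with $\ran(f_n)$, so that $f_n$ is a partial surjection from $A$ onto $f^{n+1}[A]=A\cup B_n$, precisely as the hypotheses of Lemma~\ref{sh02} require. Feeding the families $\langle B_n\rangle_{n\in\omega}$ and $\langle f_n\rangle_{n\in\omega}$ into that lemma then explicitly yields a partial surjection $g\colon A\to A\cup\bigcup_{n\in\omega}B_n$, and it remains only to compute $A\cup\bigcup_{n\in\omega}B_n=A\cup\bigcup_{n\in\omega}f^{n+1}[A]=\bigcup_{n\in\omega}f^n[A]$, the desired codomain. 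Since the passage from $(A,f)$ to these two families uses no choices and Lemma~\ref{sh02} is itself explicit, the whole construction is explicit.

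I do not expect a genuine obstacle; this is essentially a bookkeeping reduction to the previous lemma. The one point deserving care is the verification that $f_n$, defined as a restricted composition, really surjects onto the iterated image $f^{n+1}[A]$: one must confirm that shrinking the domain of the composition down to $A$ does not cut the range below $f^{n+1}[A]$. This follows from the same induction that computes $f^{n+1}[A]$ as an iterated image, since at each stage the relevant preimages already lie inside $A$ (or inside images thereof), so no surjectivity is lost.
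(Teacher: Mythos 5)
Your proof is correct and is essentially the paper's own argument: the paper likewise reduces to Lemma~\ref{sh02}, taking $B_n=f^n[A]\setminus A$ and $f_n=f^n\rstr A$, which differs from your choice only by a harmless index shift (your families start at $f^{1}$, the paper's at $f^{0}$ with $B_0=\varnothing$). The monotonicity observation $f^n[A]\subseteq f^{n+1}[A]$ and the range verification you spell out are exactly the implicit checks behind the paper's one-line proof.
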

\begin{proof}
Take $B_n=f^n[A]\setminus A$ and $f_n=f^n\rstr A$ in Lemma~\ref{sh02}.
\end{proof}

\begin{lemma}[Knaster's fixed point theorem]\label{sh01}
Let $i:\scrP(A)\to\scrP(A)$ be isotone. Then
\[
X=\textstyle\bigcup\{D\subseteq A\mid D\subseteq i(D)\}
\]
is a fixed point of $i$.
\end{lemma}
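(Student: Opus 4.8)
The plan is to verify the two inclusions $X\subseteq i(X)$ and $i(X)\subseteq X$ separately, relying solely on isotonicity; no form of choice enters, since $X$ is defined explicitly from $i$ and $A$. I would write $\mathcal{S}=\{D\subseteq A\mid D\subseteq i(D)\}$ for the family over which the union defining $X$ is taken, so that $X=\bigcup\mathcal{S}$, and then show that $X$ itself belongs to $\mathcal{S}$ and is in addition mapped into itself by $i$.

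For the first inclusion, I would fix an arbitrary $D\in\mathcal{S}$. Since $D\subseteq X$, isotonicity gives $i(D)\subseteq i(X)$, and combining with the defining property $D\subseteq i(D)$ yields $D\subseteq i(X)$. As this holds for every member of $\mathcal{S}$, taking the union over $\mathcal{S}$ gives $X=\bigcup\mathcal{S}\subseteq i(X)$; in particular $X\in\mathcal{S}$.

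The second inclusion is where the one genuinely clever step lies, though it is brief. Having established $X\subseteq i(X)$, I would apply $i$ and invoke isotonicity once more to obtain $i(X)\subseteq i(i(X))$. But this says precisely that $i(X)$ itself satisfies the defining condition of $\mathcal{S}$, that is, $i(X)\in\mathcal{S}$; hence $i(X)\subseteq\bigcup\mathcal{S}=X$. Combining the two inclusions gives $X=i(X)$, so $X$ is a fixed point. I anticipate no real obstacle here: the whole argument is a two-line bootstrap, and the only point requiring a moment's thought is recognizing that the property $D\subseteq i(D)$ is preserved under applying $i$, which is exactly what promotes $i(X)$ back into $\mathcal{S}$ and closes the loop.
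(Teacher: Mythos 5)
Your proof is correct and follows essentially the same route as the paper's: you establish $X\subseteq i(X)$ by combining $D\subseteq i(D)\subseteq i(X)$ over all members of the family, then apply isotonicity once more to get $i(X)\subseteq i(i(X))$, so that $i(X)$ itself satisfies the defining condition and is therefore contained in the union $X$. The only cosmetic difference is your explicit naming of the family $\mathcal{S}$ and the observation that $X\in\mathcal{S}$, which the paper leaves implicit.
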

\begin{proof}
For every $D\subseteq A$ with $D\subseteq i(D)$, we have $D\subseteq X$,
and thus $D\subseteq i(D)\subseteq i(X)$ since $i$ is isotone. Hence, $X\subseteq i(X)$,
which implies $i(X)\subseteq i(i(X))$ since $i$ is isotone,
and so $i(X)\subseteq X$ by the definition of $X$. Therefore, $i(X)=X$.
\end{proof}

\begin{definition}
$\langle f,g\rangle$ is a \emph{surjection pair} between $A$ and $B$ if
$f:A\to B$ and $g:B\to A$ are partial surjections.
\end{definition}

The key step of our proof is the following lemma, which is Lemma~2.3 of \cite{Truss1984}.
The proof presented here is simpler and more straightforward than, but similar to, the one in \cite{Truss1984}.

\begin{lemma}\label{sh04}
From sets $A,B,C$ with $A\cap B=\varnothing$ and a surjection pair $\langle f,g\rangle$ between $A\cup B$ and $C$,
one can explicitly partition $A,B,C$ as
\begin{align*}
A & =A'\cup P\\
B & =B'\cup Q\\
C & =\tilde{A}\cup\tilde{B}
\end{align*}
and explicitly define partial surjections from $A'$ onto $A'\cup Q$ and from $B'$ onto $B'\cup P$,
and surjection pairs between $\tilde{A}$ and $A'$ and between $\tilde{B}$ and $B'$.
\end{lemma}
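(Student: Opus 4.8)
The plan is to reduce the statement to a handful of containment conditions and then secure them by combining a Knaster fixed point with an absorption argument. The first observation is that a surjection pair between sets $X$ and $Y$ only asks for one partial surjection each way, which can come from two \emph{different} maps; so it suffices to produce a partition $C=\tilde A\cup\tilde B$ together with $A'\subseteq A$ and $B'\subseteq B$ (setting $P=A\setminus A'$ and $Q=B\setminus B'$) satisfying $\tilde A\subseteq f[A']$, $A'\subseteq g[\tilde A]$, $\tilde B\subseteq f[B']$, $B'\subseteq g[\tilde B]$, together with partial surjections $A'\twoheadrightarrow A'\cup Q$ and $B'\twoheadrightarrow B'\cup P$. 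The surjection pairs then drop out by restricting $f$ and $g$ to the relevant pieces and cutting down their codomains.

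To locate the partition I would apply Lemma~\ref{sh01} to the operator $Z\mapsto f[A\cap g[Z]]\cup(C\setminus f[B\cap g[C\setminus Z]])$ on $\scrP(C)$, which is isotone since each of $g[\cdot]$, intersection with a fixed set, $f[\cdot]$, and the double-complement $Z\mapsto C\setminus f[B\cap g[C\setminus Z]]$ preserves inclusion. Letting $\tilde A$ be the resulting fixed point, $\tilde B=C\setminus\tilde A$, $A'=A\cap g[\tilde A]$ and $B'=B\cap g[\tilde B]$, the fixed-point identity at once gives $A'\subseteq g[\tilde A]$, $B'\subseteq g[\tilde B]$, $\tilde B\subseteq f[B']$ and $f[A']\subseteq\tilde A$. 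Since $g$ is onto $A\cup B$ one has $A\cup B\subseteq g[\tilde A]\cup g[\tilde B]$, whence $P\subseteq g[\tilde B]$ and $Q\subseteq g[\tilde A]$. The cross surjection $B'\twoheadrightarrow B'\cup P$ is then essentially free: the image of $g\circ(f\rstr B')$ is $g[f[B']]\supseteq g[\tilde B]\supseteq B'\cup P$, so restricting its codomain delivers the required map, and this also completes the surjection pair on the $B$-side.

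The remaining, genuinely harder, task is the $A$-side: I must upgrade $f[A']\subseteq\tilde A$ to a partial surjection $A'\twoheadrightarrow\tilde A$ and produce $A'\twoheadrightarrow A'\cup Q$. This is exactly where surjectivity (rather than bijectivity) of $f$ and $g$ bites, so that, unlike in the Cantor--Bernstein situation, the reverse direction is not automatic and the deficit $\tilde A\setminus f[A']$ must be recovered by hand. Using $C=f[A]\cup f[B]$ one checks that this deficit is contained in $f[P\cup Q]$, so its points are reachable from $A'$ only after iterating the back-and-forth maps to drag the leftover pieces into range. I would therefore build a self-map $\phi$ from suitable restrictions and composites of $f$ and $g$, arrange $A'\subseteq\phi[A']$, and invoke Corollary~\ref{sh03} (resting in turn on the infinitary absorption of Lemma~\ref{sh02}) to obtain a partial surjection from $A'$ onto $\bigcup_{n}\phi^{n}[A']$; restricting its codomain would then yield both $A'\twoheadrightarrow\tilde A$ and $A'\twoheadrightarrow A'\cup Q$. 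The main obstacle is precisely the design of $\phi$: one must verify $A'\subseteq\phi[A']$ and confirm that the orbit $\bigcup_{n}\phi^{n}[A']$ sweeps up exactly the leftover set $Q$ and the deficit, using the disjointness $A\cap B=\varnothing$ (as in Lemma~\ref{sh02}) to keep the successive layers apart.
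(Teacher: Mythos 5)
Your setup and the $B$-side are sound: the operator $Z\mapsto f[A\cap g[Z]]\cup\bigl(C\setminus f[B\cap g[C\setminus Z]]\bigr)$ is indeed isotone on $\scrP(C)$, the fixed-point identity does give $f[A']\subseteq\tilde{A}$ and $\tilde{B}\subseteq f[B']$, your containment of the deficit in $f[P\cup Q]$ checks out (since $\tilde{A}\setminus f[A']\subseteq C\setminus f[B']$), and $g\circ(f\rstr B')$ genuinely yields the partial surjection from $B'$ onto $B'\cup P$ and the $B$-side surjection pair. But your proof stops exactly where the lemma is hard, and the gap is not merely unfinished bookkeeping. You never construct $\phi$, and the natural candidate $\phi=g\circ f$ (suitably restricted) need not satisfy $A'\subseteq\phi[A']$: from $A'=A\cap g[\tilde{A}]$ and $\tilde{A}=f[A']\cup(C\setminus f[B'])$ one only gets $A'\subseteq g[f[A']]\cup g[\tilde{A}\setminus f[A']]$, and a point of $A'$ may be a $g$-image \emph{only} of deficit points, so the hypothesis of Corollary~\ref{sh03} can fail outright. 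Your single fixed point on $\scrP(C)$ supplies no seed with the self-reproduction property needed to start the absorption, and nothing in your argument shows that every deficit point (or every point of $Q$) is reachable by forward iteration from $A'$ at all.

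Compare what the actual proof must do to secure precisely this step. It takes fixed points $X\subseteq A$ of $D\mapsto A\cap g[f[D]]$ and $Y\subseteq B$ of the analogous operator, so that $X=A\cap g[f[X]]$ holds \emph{by construction} --- that is the seed with $X\subseteq g[f[X]]$ which Corollary~\ref{sh03} needs. Even then it is not enough: the maps are trimmed twice, to $\tilde{f},\tilde{g}$ with $\tilde{f}^{-1}[\tilde{g}^{-1}[X]]\subseteq X$ and $\tilde{f}^{-1}[\tilde{g}^{-1}[Y]]\subseteq Y$, to stop orbits from leaking between the two sides (this is also what makes $P$ and $Q$, defined as $A\cap\bigcup_{n}(\tilde{g}\circ\tilde{f})^{n}[Y]$ and $B\cap\bigcup_{n}(\tilde{g}\circ\tilde{f})^{n}[X]$, behave, and gives $A'\twoheadrightarrow A'\cup Q$); a reachability claim is proved, using the maximality of the Knaster fixed point, that every $c\in C$ has some backward iterate under $\tilde{g}\circ\tilde{f}$ meeting $A\cup Y$; and finally a least-iterate map $h$ on $A'$ is defined and shown, via a minimality argument, to have the deficit inside $\ran(h)$. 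Your ``main obstacle'' --- designing $\phi$, verifying $A'\subseteq\phi[A']$, and confirming the orbit sweeps up $Q$ and the deficit --- is this entire machinery, i.e.\ the mathematical content of the lemma rather than a deferred detail. As it stands the proposal proves the $B$-side and correctly isolates the difficulty, but it does not prove the statement.
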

\begin{proof}
Without loss of generality, suppose that $f[A]\cap f[B]=\varnothing$.
Consider the isotone functions $i:\scrP(A)\to\scrP(A)$ and $j:\scrP(B)\to\scrP(B)$ defined by
\begin{align*}
i(D) & =A\cap g[f[D]],\\
j(E) & =B\cap g[f[E]].
\end{align*}
By Lemma~\ref{sh01},
\begin{align*}
X & =\textstyle\bigcup\{D\subseteq A\mid D\subseteq i(D)\}
\intertext{and}
Y & =\textstyle\bigcup\{E\subseteq B\mid E\subseteq j(E)\}
\end{align*}
are fixed points of $i$ and $j$, respectively. Let
\begin{align*}
f' & =f\setminus\bigl(f\rstr(f^{-1}[f[X]]\setminus X)\bigr),\\
g' & =g\setminus\bigl(g\rstr(g^{-1}[X]\setminus f[X])\bigr).
\end{align*}
Clearly, $\langle f',g'\rangle$ is a surjection pair between $A\cup B$ and $C$.
It is also easy to see that $X\subseteq g'[f'[X]]$, $Y\subseteq g'[f'[Y]]$, and ${f'}^{-1}[{g'}^{-1}[X]]\subseteq X$. Let
\begin{align*}
\tilde{f} & =f'\setminus\bigl(f'\rstr({f'}^{-1}[f'[Y]]\setminus Y)\bigr),\\
\tilde{g} & =g'\setminus\bigl(g'\rstr({g'}^{-1}[Y]\setminus f'[Y])\bigr).
\end{align*}
Clearly, $\langle\tilde{f},\tilde{g}\rangle$ is a surjection pair between $A\cup B$ and $C$.
It is also easy to see that \text{$X\subseteq \tilde{g}[\tilde{f}[X]]$}, $Y\subseteq \tilde{g}[\tilde{f}[Y]]$,
$\tilde{f}^{-1}[\tilde{g}^{-1}[X]]\subseteq X$, and $\tilde{f}^{-1}[\tilde{g}^{-1}[Y]]\subseteq Y$.

We claim that, for every $c\in C$,
\begin{align}
(A\cup Y)\cap\bigcup_{n\in\omega}(\tilde{g}\circ\tilde{f})^{-n}[\tilde{f}^{-1}[\{c\}]] & \neq\varnothing,\label{sh05}\\
(B\cup X)\cap\bigcup_{n\in\omega}(\tilde{g}\circ\tilde{f})^{-n}[\tilde{f}^{-1}[\{c\}]] & \neq\varnothing.\label{sh06}
\end{align}
Assume to the contrary that $(A\cup Y)\cap\bigcup_{n\in\omega}(\tilde{g}\circ\tilde{f})^{-n}[\tilde{f}^{-1}[\{c\}]]=\varnothing$
for some $c\in C$. Let $E=Y\cup\bigcup_{n\in\omega}(\tilde{g}\circ\tilde{f})^{-n}[\tilde{f}^{-1}[\{c\}]]\subseteq B$.
It is easy to see that $E\subseteq j(E)$, so $E\subseteq Y$, a contradiction.
This proves \eqref{sh05}. The proof of \eqref{sh06} is similar.

Now, we define
\begin{align*}
P  & =A\cap\bigcup_{n\in\omega}(\tilde{g}\circ\tilde{f})^n[Y],\\
Q  & =B\cap\bigcup_{n\in\omega}(\tilde{g}\circ\tilde{f})^n[X],\\
A' & =A\setminus P,\\
B' & =B\setminus Q.
\end{align*}
Using $\tilde{f}^{-1}[\tilde{g}^{-1}[X]]\subseteq X$, an easy induction shows that
$X\cap(\tilde{g}\circ\tilde{f})^n[Y]=\varnothing$ for all $n\in\omega$, so $X\cap P=\varnothing$,
which implies $X\subseteq A'$. Since $X\subseteq \tilde{g}[\tilde{f}[X]]$, it follows from Corollary~\ref{sh03} that
one can explicitly define a partial surjection from $X$ onto $\bigcup_{n\in\omega}(\tilde{g}\circ\tilde{f})^n[X]$,
which includes $X\cup Q$. Hence, one can explicitly define a partial surjection from $A'$ onto $A'\cup Q$,
and similarly a partial surjection from $B'$ onto $B'\cup P$.

Finally, we define
\begin{align*}
\tilde{A} & =\bigl(\tilde{g}^{-1}[A]\setminus\bigcup_{n\in\omega}\tilde{f}[(\tilde{g}\circ\tilde{f})^n[Y]]\bigr)
\cup\bigcup_{n\in\omega}\tilde{f}[(\tilde{g}\circ\tilde{f})^n[X]]\cup\bigl(\tilde{f}[A']\setminus\dom(\tilde{g})\bigr),\\
\tilde{B} & =C\setminus\tilde{A}.
\end{align*}
The situation is illustrated in Figure~\ref{sh15}.

\begin{figure}[htb]
\begin{tikzpicture}
\draw (0,0) ellipse (25pt and 50pt);
\draw (0,-4) ellipse (25pt and 50pt);
\draw (0,0.5) ellipse (15pt and 25pt);
\draw (0,-4.5) ellipse (15pt and 25pt);
\draw (4,-2) ellipse (40pt and 100pt);
\draw (2.6,-2) .. controls (3.6,-2.6) and (4.4,-2.6) .. (5.4,-2);
\draw[-] (-0.73,-1)--node[below]{$P$}node[above]{$A'$}(0.73,-1);
\draw[-] (-0.73,-3)--node[above]{$Q$}node[below]{$B'$}(0.73,-3);
\draw[->] (1,1.5) to[bend left] node[above]{$\tilde{f}$} (3,1.4);
\draw[->] (3,-5.4) to[bend left] node[below]{$\tilde{g}$} (1,-5.5);
\node at (0,0.7) {$X$};
\node at (0,-4.7) {$Y$};
\node at (-0.9,1.5) {$A$};
\node at (-0.9,-5.5) {$B$};
\node at (4,-0.5) {$\tilde{A}$};
\node at (4,-3.5) {$\tilde{B}$};
\node at (5,1.5) {$C$};
\end{tikzpicture}
\caption{The situation between these sets.}\label{sh15}
\end{figure}
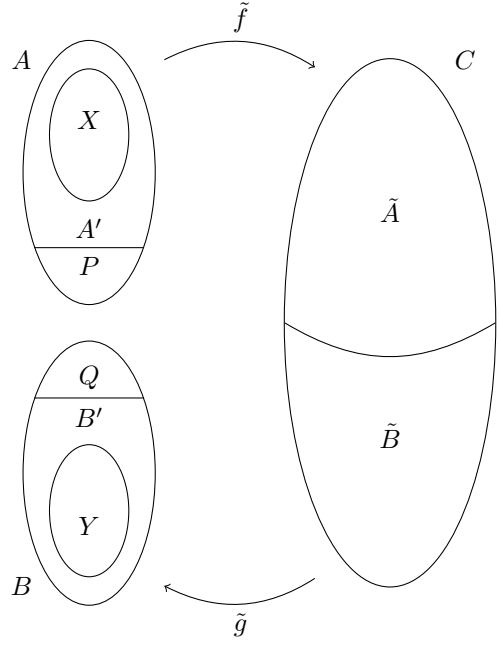

We first note that
\begin{equation}\label{sh07}
\tilde{g}^{-1}[B]\setminus\bigcup_{n\in\omega}\tilde{f}[(\tilde{g}\circ\tilde{f})^n[X]]\subseteq\tilde{B}
\end{equation}
and
\begin{equation}\label{sh08}
\tilde{B}\subseteq\bigl(\tilde{g}^{-1}[B]\setminus\bigcup_{n\in\omega}\tilde{f}[(\tilde{g}\circ\tilde{f})^n[X]]\bigr)
\cup\bigcup_{n\in\omega}\tilde{f}[(\tilde{g}\circ\tilde{f})^n[Y]]\cup\bigl(\tilde{f}[B']\setminus\dom(\tilde{g})\bigr).
\end{equation}
It is also easy to see that
\begin{align*}
\tilde{g}^{-1}[A'] & \subseteq\tilde{g}^{-1}[A]\setminus\bigcup_{n\in\omega}\tilde{f}[(\tilde{g}\circ\tilde{f})^n[Y]],\\
\tilde{g}^{-1}[B'] & \subseteq\tilde{g}^{-1}[B]\setminus\bigcup_{n\in\omega}\tilde{f}[(\tilde{g}\circ\tilde{f})^n[X]].
\end{align*}
So, $\tilde{g}$ induces partial surjections from $\tilde{A}$ onto $A'$ and from $\tilde{B}$ onto $B'$ by \eqref{sh07}.

We conclude the proof by explicitly defining partial surjections from $A'$ onto $\tilde{A}$ and from $B'$ onto $\tilde{B}$ as follows.
Since $X\subseteq A'$, it follows that
\begin{equation}\label{sh09}
\tilde{g}^{-1}[X]\subseteq\tilde{g}^{-1}[A']\subseteq\tilde{g}^{-1}[A]\setminus\bigcup_{n\in\omega}\tilde{f}[(\tilde{g}\circ\tilde{f})^n[Y]].
\end{equation}
Since $\tilde{f}^{-1}[\tilde{g}^{-1}[X]]\subseteq X$, we have
$\tilde{g}^{-1}[X]\subseteq\tilde{f}[X]=\tilde{f}[\tilde{g}[\tilde{g}^{-1}[X]]]$, so
it follows from Corollary~\ref{sh03} that one can explicitly define a partial surjection
from $\tilde{g}^{-1}[X]$ onto $\bigcup_{n\in\omega}(\tilde{f}\circ\tilde{g})^n[\tilde{g}^{-1}[X]]
=\tilde{g}^{-1}[X]\cup\bigcup_{n\in\omega}\tilde{f}[(\tilde{g}\circ\tilde{f})^n[X]]$, which implies that, by \eqref{sh09},
\begin{equation}\label{sh10}
\parbox{33em}{
one can explicitly define a partial surjection from
$\tilde{g}^{-1}[A]\setminus\bigcup_{n\in\omega}\tilde{f}[(\tilde{g}\circ\tilde{f})^n[Y]]$ onto
$\bigl(\tilde{g}^{-1}[A]\setminus\bigcup_{n\in\omega}\tilde{f}[(\tilde{g}\circ\tilde{f})^n[Y]]\bigr)
\cup\bigcup_{n\in\omega}\tilde{f}[(\tilde{g}\circ\tilde{f})^n[X]]$.}
\end{equation}

Let $h$ be the partial function on $A'$ defined by
\[
h(a)=
\begin{cases}
\tilde{f}((\tilde{g}\circ\tilde{f})^m(a)) & \text{if there exists a least natural number $m$ for which}\\
& \tilde{f}((\tilde{g}\circ\tilde{f})^m(a))\in\tilde{g}^{-1}[A]\setminus\bigcup_{n\in\omega}\tilde{f}[(\tilde{g}\circ\tilde{f})^n[Y]],\\
\tilde{f}(a)     & \text{if $a\in\dom(\tilde{f})$ and $\tilde{f}(a)\notin\dom(\tilde{g})$,}\\
\text{undefined} & \text{otherwise.}
\end{cases}
\]
We claim that
\begin{equation}\label{sh11}
\bigl(\tilde{g}^{-1}[A]\setminus\bigcup_{n\in\omega}\tilde{f}[(\tilde{g}\circ\tilde{f})^n[Y]]\bigr)
\cup\bigl(\tilde{f}[A']\setminus\dom(\tilde{g})\bigr)\subseteq\ran(h).
\end{equation}
Clearly, $\tilde{f}[A']\setminus\dom(\tilde{g})\subseteq\ran(h)$.
Let $c\in\tilde{g}^{-1}[A]\setminus\bigcup_{n\in\omega}\tilde{f}[(\tilde{g}\circ\tilde{f})^n[Y]]$.
By \eqref{sh05}, it is easy to see that $A'\cap\bigcup_{n\in\omega}(\tilde{g}\circ\tilde{f})^{-n}[\tilde{f}^{-1}[\{c\}]]\neq\varnothing$.
Let $m$ be the least natural number for which $A'\cap(\tilde{g}\circ\tilde{f})^{-m}[\tilde{f}^{-1}[\{c\}]]\neq\varnothing$.
Let $a\in A'\cap(\tilde{g}\circ\tilde{f})^{-m}[\tilde{f}^{-1}[\{c\}]]$. Then
\[
\tilde{f}((\tilde{g}\circ\tilde{f})^m(a))=c\in\tilde{g}^{-1}[A]\setminus\bigcup_{n\in\omega}\tilde{f}[(\tilde{g}\circ\tilde{f})^n[Y]].
\]
If there exists an $l<m$ such that
$\tilde{f}((\tilde{g}\circ\tilde{f})^l(a))\in\tilde{g}^{-1}[A]\setminus\bigcup_{n\in\omega}\tilde{f}[(\tilde{g}\circ\tilde{f})^n[Y]]$,
then it is easy to see that $(\tilde{g}\circ\tilde{f})^{l+1}(a)\in A'\cap(\tilde{g}\circ\tilde{f})^{-(m-l-1)}[\tilde{f}^{-1}[\{c\}]]$,
contradicting the minimality of $m$. Hence, $c=\tilde{f}((\tilde{g}\circ\tilde{f})^m(a))=h(a)\in\ran(h)$.

Now, by \eqref{sh10} and \eqref{sh11}, one can explicitly define a partial surjection from $A'$ onto $\tilde{A}$.
Similarly, by \eqref{sh08}, one can explicitly define a partial surjection from $B'$ onto $\tilde{B}$.
\end{proof}

\begin{lemma}\label{sh16}
From pairwise disjoint sets $D_1,D_2,Q$ and a partial surjection $f$ from $D_1\cup D_2$ onto $D_1\cup D_2\cup Q$,
one can explicitly partition $Q$ as $Q=Q_1\cup Q_2$ and explicitly define partial surjections
from $D_1$ onto $D_1\cup Q_1$ and from $D_2$ onto $D_2\cup Q_2$.
\end{lemma}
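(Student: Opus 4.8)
The plan is to peel off the surplus one element at a time by following the forward orbits of $f$, and then to absorb each surplus element into one of the two sides. Since $\dom(f)\subseteq D_1\cup D_2$ is disjoint from $Q$, every element of $Q$ is a sink, and for each $x\in D_1\cup D_2$ the orbit $x,f(x),f^2(x),\dots$ is completely determined, so no choices enter. For $q\in Q$ let $T_q=\{x\in D_1\cup D_2\mid f^n(x)=q\text{ for some }n\text{ while }f^k(x)\notin Q\text{ for all }k<n\}$ be the set of points whose orbit first meets $Q$ at $q$; the $T_q$ are pairwise disjoint, and the orbits that never meet $Q$ form a remaining set $R$. Because $f$ is onto $D_1\cup D_2\cup Q$, every element of $T_q\cup\{q\}$ has an $f$-preimage, and that preimage again lies in $T_q$; hence $T_q\subseteq f[T_q]$ and $f\rstr T_q$ is a partial surjection from $T_q$ onto $T_q\cup\{q\}$. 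On $R$ nothing must be absorbed, so I would cover $R\cap D_i$ by the identity on each side. It therefore suffices to work inside a fixed $T_q$ and split the single surplus element $q$.

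Inside $T_q$, for $i=1,2$, I would use the \emph{same-side collapse} $\mu_i\colon D_i\cap T_q\to(D_i\cap T_q)\cup\{q\}$, where $\mu_i(x)$ is the first term of $f(x),f^2(x),\dots$ lying in $(D_i\cap T_q)\cup\{q\}$; this is explicitly defined. Applying Lemma~\ref{sh01} to the isotone operator $D\mapsto\mu_i[D]\cap(D_i\cap T_q)$ on $\scrP(D_i\cap T_q)$ produces its greatest fixed point $W_i$, so that $W_i=\mu_i[W_i]\cap(D_i\cap T_q)$; in particular $W_i\subseteq\mu_i[W_i]\subseteq W_i\cup\{q\}$, and therefore $\mu_i\rstr W_i$ is \emph{already} a partial surjection from $W_i$ onto $W_i\cup Q_i$, where $Q_i=\mu_i[W_i]\cap\{q\}$. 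Extending by the identity on $(D_i\cap T_q)\setminus W_i$ yields a partial surjection from $D_i\cap T_q$ onto $(D_i\cap T_q)\cup Q_i$. I would then assign $q$ to side $1$ when $q\in Q_1$ and to side $2$ otherwise; if $q$ happens to lie in both $Q_1$ and $Q_2$, I redirect the $W_2$-points that map to $q$ back to themselves, so that only side $1$ covers it. Gluing these maps over all $q$, together with the identities on $R$, has disjoint domains and disjoint pieces of each $D_i$, and so produces the partition $Q=Q_1\cup Q_2$ and the two required surjections. (For the general, un-reduced statement one would instead feed the collapses into Corollary~\ref{sh03}, whose finitary reshuffling via Lemma~\ref{sh02} absorbs the whole of $\bigcup_n\mu_i^n[W_i]\cap Q$ at once.)

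The hard part will be the choiceless proof that this construction \emph{exhausts} $Q$, i.e.\ that every $q$ satisfies $q\in\mu_1[W_1]\cup\mu_2[W_2]$, so that it is absorbed on at least one side. The temptation is to take an $f$-preimage $c$ of $q$, continue backwards to get an infinite chain, and pigeonhole an infinite same-side descending chain (hence a point of some $W_i$ mapping to $q$); but extracting such a chain requires a form of dependent choice that is unavailable here, and indeed a no-leaf orbit tree $T_q$ need carry no infinite branch. The correct route, mirroring the maximality argument used for \eqref{sh05} in the proof of Lemma~\ref{sh04}, is to reason at the level of sets: assuming $q\in\mu_i[W_i]$ fails for both $i$, one must exhibit an explicit nonempty $\mu_i$-preimage-closed subset that reaches $q$, which by maximality of the greatest fixed point would have to be contained in $W_i$—a contradiction—using the surjectivity of $f$ (the identity $T_q\subseteq f[T_q]$, i.e.\ the absence of leaves) to drive the closure. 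Carrying out this maximality argument uniformly, so that the entire assignment is genuinely definable and the two resulting maps are honest surjections onto $D_i\cup Q_i$, is the crux of the proof, and is precisely where the finitary machinery of Lemmas~\ref{sh02} and~\ref{sh03} must stand in for any appeal to infinite backward chains.
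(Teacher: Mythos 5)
Your reductions are all sound: the fibering of $D_1\cup D_2$ into the sets $T_q$ (first $Q$-hit at $q$) plus the $Q$-avoiding part $R$ is definable and correct, $f\rstr T_q$ is indeed a partial surjection onto $T_q\cup\{q\}$, the collapses $\mu_i$ are total on $D_i\cap T_q$, the greatest fixed points $W_i$ behave as you say, and the tie-breaking and gluing are harmless. But the entire content of the lemma sits in the exhaustion claim $q\in\mu_1[W_1]\cup\mu_2[W_2]$, which you explicitly leave unproven, and the strategy you sketch for it does not transfer from \eqref{sh05}. In Lemma~\ref{sh04} the pre-fixed-point trick works because surjectivity makes the backward closure of the missing point, together with $Y$, literally a pre-fixed point of the operator $j$. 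Here the relevant operator is built from $\mu_i$, and $\mu_i$ is in general \emph{not} surjective onto $D_i\cap T_q$: a side-$i$ point all of whose $f$-preimage tree lies on side $3-i$ has no $\mu_i$-preimage whatsoever. Meanwhile the $\mu_i$-backward closure of $q$ is all of $D_i\cap T_q$ (every point $\mu_i$-iterates to $q$ in finitely many steps), and this set is not self-supporting, so adding it to $W_i$ produces no pre-fixed point and no contradiction. ``Exhibit an explicit nonempty $\mu_i$-preimage-closed subset that reaches $q$'' is thus a restatement of the problem, not a proof, and this is a genuine gap at the heart of the lemma.

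The gap can in fact be closed in $\mathsf{ZF}$, but by a mechanism of a different character from anything in your sketch: if both $W_1$ and $W_2$ were empty, then by maximality both derived relations ``$y$ is a $\mu_i$-preimage of $x$'' would be well-founded, hence admit rank functions $\rho_i$; setting $r_i(z)=\rho_i$ of the nearest side-$i$ weak ancestor of $z$ (or a fixed ordinal above $\ran(\rho_i)$ if there is none), every $f$-preimage step strictly decreases one of $r_1,r_2$ and preserves the other, so the natural (Hessenberg) sum $r_1\oplus r_2$ strictly decreases along preimages, and a node realizing the least attained value would have no preimage, contradicting surjectivity of $f$ onto $T_q\cup\{q\}$. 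Once some $W_i\neq\varnothing$, the \emph{equality} $W_i=\mu_i[W_i]\cap(D_i\cap T_q)$ makes $W_i$ closed under $\mu_i$ until it outputs $q$, and the finite $\mu_i$-orbit of any point of $W_i$ then ends at a witness $x\in W_i$ with $\mu_i(x)=q$. Note that the paper avoids this dichotomy altogether by breaking the symmetry you try to maintain: it takes $C_1=\{c\in D_1\mid f^{-n}[\{c\}]\subseteq D_1\text{ for all }n\}$, then $C_2=\{c\in D_2\mid f^{-n}[\{c\}]\cap C_1=\varnothing\text{ for all }n\}$, gives side $1$ priority via $Q_1=Q\cap\ran(g_1)$, $Q_2=Q\setminus Q_1$, and proves exhaustion by extracting, at finite depth in the backward tree of $e\in Q_2$, a witness in $C_2$ --- no fixed points, no ranks. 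Your symmetric fixed-point architecture is salvageable, but the missing well-foundedness argument is the hard part, and your proposal neither supplies it nor points in its direction.
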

\begin{proof}
Let
\begin{align*}
C_1 & =\{c\in D_1\mid f^{-n}[\{c\}]\subseteq D_1\text{ for all }n\in\omega\},\\
C_2 & =\{c\in D_2\mid f^{-n}[\{c\}]\cap C_1=\varnothing\text{ for all }n\in\omega\}.
\end{align*}
Let $g_1$ and $g_2$ be the functions on $D_1$ and $D_2$, respectively, defined by
\begin{align*}
g_1(c) & =
\begin{cases}
f(c)   & \text{if $c\in C_1$ and $f(c)\in C_1$,}\\
f^m(c) & \text{if $c\in C_1\setminus f^{-1}[C_1]$ and $f^m(c)\in Q$ for some least $m>0$,}\\
c      & \text{otherwise,}
\end{cases}
\intertext{and}
g_2(c) & =
\begin{cases}
f^k(c) & \text{if $c\in C_2$ and $f^k(c)\in C_2$ for some least $k>0$,}\\
f^l(c) & \text{if $c\in C_2\setminus\bigcup_{n>0}f^{-n}[C_2]$ and $f^l(c)\in Q$ for some least $l>0$,}\\
c      & \text{otherwise.}
\end{cases}
\end{align*}
It is easy to see that $D_i\subseteq\ran(g_i)$ for $i=1,2$.
Let $Q_1=Q\cap\ran(g_1)$ and let $Q_2=Q\setminus Q_1$.
Then $g_1$ is a surjection from $D_1$ onto $D_1\cup Q_1$.
It suffices to show $Q_2\subseteq\ran(g_2)$, since then $g_2$ will induce a partial surjection from $D_2$ onto $D_2\cup Q_2$.
Let $e\in Q_2$. Since $e\notin\ran(g_1)$, it follows that $e=f^m(c)$ for no $c\in C_1$ and $m\in\omega$,
and hence there is a least $l>0$ such that $e=f^l(d)$ for some $d\in C_2$.
By the minimality of $l$, we have $d\notin\bigcup_{n>0}f^{-n}[C_2]$, so $e=f^l(d)=g_2(d)\in\ran(g_2)$.
\end{proof}

\begin{lemma}\label{sh17}
From pairwise disjoint sets $A_1,A_2,B_1,B_2$ and a surjection pair $\langle f,g\rangle$ between $A_1\cup A_2$ and $B_1\cup B_2$,
one can explicitly define pairwise disjoint sets $C_1,C_2,C_3,C_4$ and surjection pairs
between $A_1$ and $C_1\cup C_2$, between $A_2$ and $C_3\cup C_4$, between $B_1$ and $C_1\cup C_3$, and between $B_2$ and $C_2\cup C_4$.
\end{lemma}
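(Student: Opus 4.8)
The plan is to reduce the statement to a single application of Lemma~\ref{sh04} followed by two applications of Lemma~\ref{sh16}, the latter being used to distribute the ``leftover'' sets produced by Lemma~\ref{sh04} across the given partition $B_1\cup B_2$.

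First I would apply Lemma~\ref{sh04} with $A=A_1$, $B=A_2$, and $C=B_1\cup B_2$ (here $A_1\cap A_2=\varnothing$ holds by hypothesis, and $\langle f,g\rangle$ is a surjection pair between $A_1\cup A_2$ and $B_1\cup B_2$). This yields partitions $A_1=A_1'\cup P$, $A_2=A_2'\cup Q$, and $B_1\cup B_2=\tilde A\cup\tilde B$, together with partial surjections from $A_1'$ onto $A_1'\cup Q$ and from $A_2'$ onto $A_2'\cup P$, and surjection pairs between $\tilde A$ and $A_1'$ and between $\tilde B$ and $A_2'$. I would then refine the codomain side by intersecting with the given partition: set $\tilde A_i=\tilde A\cap B_i$ and $\tilde B_i=\tilde B\cap B_i$ for $i=1,2$, so that $B_1=\tilde A_1\cup\tilde B_1$ and $B_2=\tilde A_2\cup\tilde B_2$ are disjoint unions, and $\tilde A=\tilde A_1\cup\tilde A_2$, $\tilde B=\tilde B_1\cup\tilde B_2$.

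The key difficulty is that the leftovers $P\subseteq A_1$ and $Q\subseteq A_2$ are ``crossed'': their surjective mass is absorbed on the opposite side ($P$ by $A_2'$, and $Q$ by $A_1'$), so they cannot simply be appended to the naive fourfold partition of $B_1\cup B_2$. I would resolve this with Lemma~\ref{sh16}. Composing the partial surjection from $A_1'$ onto $A_1'\cup Q$ with the surjection pair between $\tilde A$ and $A_1'$ (and the identity on $Q$) produces a partial surjection from $\tilde A=\tilde A_1\cup\tilde A_2$ onto $\tilde A\cup Q$; applying Lemma~\ref{sh16} with $D_1=\tilde A_1$ and $D_2=\tilde A_2$ partitions $Q=Q_1\cup Q_2$ and gives partial surjections from $\tilde A_1$ onto $\tilde A_1\cup Q_1$ and from $\tilde A_2$ onto $\tilde A_2\cup Q_2$. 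Symmetrically, a partial surjection from $\tilde B$ onto $\tilde B\cup P$ yields, via Lemma~\ref{sh16}, a partition $P=P_1\cup P_2$ with partial surjections from $\tilde B_1$ onto $\tilde B_1\cup P_1$ and from $\tilde B_2$ onto $\tilde B_2\cup P_2$.

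Finally I would set $C_1=\tilde A_1\cup P_1$, $C_2=\tilde A_2\cup P_2$, $C_3=\tilde B_1\cup Q_1$, and $C_4=\tilde B_2\cup Q_2$; these are pairwise disjoint, since the sets $\tilde A_i,\tilde B_i$ lie in $B_1\cup B_2$ while $P_i\subseteq A_1$ and $Q_i\subseteq A_2$. The four required surjection pairs should then fall out by bookkeeping. For the domain side, extending the surjection pair between $\tilde A$ and $A_1'$ by the identity on $P$ gives a surjection pair between $C_1\cup C_2=\tilde A\cup P$ and $A_1'\cup P=A_1$, and likewise between $C_3\cup C_4=\tilde B\cup Q$ and $A_2$. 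For the codomain side, the forward surjection from $B_1=\tilde A_1\cup\tilde B_1$ onto $C_1\cup C_3$ is the disjoint union of the absorptions $\tilde A_1\to\tilde A_1\cup Q_1$ and $\tilde B_1\to\tilde B_1\cup P_1$, while the reverse surjection is the identity on $\tilde A_1\cup\tilde B_1$, so that $B_1\approx^\ast C_1\cup C_3$, and symmetrically $B_2\approx^\ast C_2\cup C_4$. I expect the main obstacle to be precisely the handling of the crossed leftovers $P$ and $Q$ through Lemma~\ref{sh16}; once they are split compatibly with the partition $B_1\cup B_2$, the remainder is a routine verification of disjointness and of the composition of partial surjections.
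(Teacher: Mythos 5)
Your proposal is correct and follows essentially the same route as the paper's proof: one application of Lemma~\ref{sh04} with $A=A_1$, $B=A_2$, $C=B_1\cup B_2$, then intersecting $\tilde{A},\tilde{B}$ with $B_1,B_2$ (the paper's $D_1,\dots,D_4$), splitting the crossed leftovers $P,Q$ via two applications of Lemma~\ref{sh16}, and assembling $C_1=\tilde{A}_1\cup P_1$, $C_2=\tilde{A}_2\cup P_2$, $C_3=\tilde{B}_1\cup Q_1$, $C_4=\tilde{B}_2\cup Q_2$ exactly as in the paper. Even your final surjection pairs (identity-extended pairs on the $A_i$ side, and absorption maps paired with identities on the $B_i$ side) coincide with the paper's $\langle g_i\cup\mathrm{id},f_i\cup\mathrm{id}\rangle$ and $\langle s_1\cup s_3,\mathrm{id}\rangle$, $\langle s_2\cup s_4,\mathrm{id}\rangle$.
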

\begin{proof}
By Lemma~\ref{sh04}, one can explicitly partition $A_1,A_2,B_1\cup B_2$ as
\begin{align*}
A_1         & =A_1'\cup P\\
A_2         & =A_2'\cup Q\\
B_1\cup B_2 & =\tilde{A}_1\cup\tilde{A}_2
\end{align*}
and explicitly define partial surjections $h_1:A_1'\to A_1'\cup Q$ and $h_2:A_2'\to A_2'\cup P$,
a~surjection pair $\langle f_1,g_1\rangle$ between $\tilde{A}_1$ and $A_1'$,
and a surjection pair $\langle f_2,g_2\rangle$ between $\tilde{A}_2$ and $A_2'$. Let
\begin{align*}
D_1 & =\tilde{A}_1\cap B_1,\\
D_2 & =\tilde{A}_1\cap B_2,\\
D_3 & =\tilde{A}_2\cap B_1,\\
D_4 & =\tilde{A}_2\cap B_2.
\end{align*}
Since $(g_1\cup\mathrm{id}_Q)\circ h_1\circ f_1$ and $(g_2\cup\mathrm{id}_P)\circ h_2\circ f_2$
are partial surjections from $D_1\cup D_2$ onto $D_1\cup D_2\cup Q$
and from $D_3\cup D_4$ onto $D_3\cup D_4\cup P$, respectively,
it follows from Lemma~\ref{sh16} that one can explicitly partition $P,Q$ as
\begin{align*}
P & =P_1\cup P_2\\
Q & =Q_1\cup Q_2
\end{align*}
and explicitly define partial surjections $s_1:D_1\to D_1\cup Q_1$, $s_2:D_2\to D_2\cup Q_2$,
$s_3:D_3\to D_3\cup P_1$, and $s_4:D_4\to D_4\cup P_2$. Finally, we define
\begin{align*}
C_1 & =D_1\cup P_1,\\
C_2 & =D_2\cup P_2,\\
C_3 & =D_3\cup Q_1,\\
C_4 & =D_4\cup Q_2.
\end{align*}
Then $C_1,C_2,C_3,C_4$ are pairwise disjoint,
and $\langle g_1\cup\mathrm{id}_P,f_1\cup\mathrm{id}_P\rangle$, $\langle g_2\cup\mathrm{id}_Q,f_2\cup\mathrm{id}_Q\rangle$,
$\langle s_1\cup s_3,\mathrm{id}_{D_1}\cup\mathrm{id}_{D_3}\rangle$, and $\langle s_2\cup s_4,\mathrm{id}_{D_2}\cup\mathrm{id}_{D_4}\rangle$
are surjection pairs between $A_1$ and $C_1\cup C_2$, between $A_2$ and $C_3\cup C_4$,
between $B_1$ and $C_1\cup C_3$, and between $B_2$ and $C_2\cup C_4$, respectively.
\end{proof}

The next corollary immediately follows from Lemma~\ref{sh17}.

\begin{corollary}\label{sh18}
The finite refinement postulate holds for surjective cardinals, that is,
for all cardinals $\mathfrak{a}_1,\mathfrak{a}_2,\mathfrak{b}_1,\mathfrak{b}_2$,
if $\mathfrak{a}_1+\mathfrak{a}_2=^\ast\mathfrak{b}_1+\mathfrak{b}_2$,
then there are cardinals $\mathfrak{c}_1,\mathfrak{c}_2,\mathfrak{c}_3,\mathfrak{c}_4$ such that
$\mathfrak{a}_1=^\ast\mathfrak{c}_1+\mathfrak{c}_2$, $\mathfrak{a}_2=^\ast\mathfrak{c}_3+\mathfrak{c}_4$,
$\mathfrak{b}_1=^\ast\mathfrak{c}_1+\mathfrak{c}_3$, and $\mathfrak{b}_2=^\ast\mathfrak{c}_2+\mathfrak{c}_4$.
\end{corollary}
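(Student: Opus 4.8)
The plan is to translate the purely set-theoretic content of Lemma~\ref{sh17} into the language of surjective cardinals; since the lemma already carries all the combinatorial weight, the only work is to set up disjoint representatives at the start and to read off cardinalities at the end.

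First I would fix \emph{pairwise disjoint} sets $A_1,A_2,B_1,B_2$ with $|A_i|=\mathfrak{a}_i$ and $|B_j|=\mathfrak{b}_j$. This is always possible: one may replace arbitrary representatives by disjoint copies (for instance by tagging the elements of the four sets with four distinct indices) without altering any of their cardinalities. Because the unions are then disjoint, we have $|A_1\cup A_2|=\mathfrak{a}_1+\mathfrak{a}_2$ and $|B_1\cup B_2|=\mathfrak{b}_1+\mathfrak{b}_2$.

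Next I would use the hypothesis $\mathfrak{a}_1+\mathfrak{a}_2=^\ast\mathfrak{b}_1+\mathfrak{b}_2$ to manufacture a surjection pair directly between $A_1\cup A_2$ and $B_1\cup B_2$. By the definition of $=^\ast$ there exist sets $U,V$ of cardinalities $\mathfrak{a}_1+\mathfrak{a}_2$ and $\mathfrak{b}_1+\mathfrak{b}_2$ admitting partial surjections in both directions; composing these with bijections $A_1\cup A_2\to U$ and $V\to B_1\cup B_2$ (and their inverses) transports them to a surjection pair $\langle f,g\rangle$ between $A_1\cup A_2$ and $B_1\cup B_2$. With this in hand I would apply Lemma~\ref{sh17} to the disjoint sets $A_1,A_2,B_1,B_2$ and the surjection pair $\langle f,g\rangle$, obtaining pairwise disjoint sets $C_1,C_2,C_3,C_4$ together with surjection pairs between $A_1$ and $C_1\cup C_2$, between $A_2$ and $C_3\cup C_4$, between $B_1$ and $C_1\cup C_3$, and between $B_2$ and $C_2\cup C_4$.

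Finally I would set $\mathfrak{c}_i=|C_i|$ for $i=1,2,3,4$. Since the $C_i$ are pairwise disjoint, $|C_1\cup C_2|=\mathfrak{c}_1+\mathfrak{c}_2$ and likewise for the other three unions; hence the surjection pair between $A_1$ and $C_1\cup C_2$ witnesses $\mathfrak{a}_1=^\ast\mathfrak{c}_1+\mathfrak{c}_2$, and in the same way $\mathfrak{a}_2=^\ast\mathfrak{c}_3+\mathfrak{c}_4$, $\mathfrak{b}_1=^\ast\mathfrak{c}_1+\mathfrak{c}_3$, and $\mathfrak{b}_2=^\ast\mathfrak{c}_2+\mathfrak{c}_4$, which is exactly what the finite refinement postulate demands. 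There is no substantive obstacle; the only point needing care is the disjointness bookkeeping at both ends—choosing disjoint representatives for the $\mathfrak{a}_i,\mathfrak{b}_j$ and exploiting the disjointness of the $C_i$—so that cardinal addition faithfully mirrors disjoint union and the conclusions of Lemma~\ref{sh17} translate verbatim into statements about $=^\ast$.
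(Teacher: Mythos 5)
Your proposal is correct and matches the paper exactly: the paper states that the corollary ``immediately follows from Lemma~\ref{sh17}'', and the routine bookkeeping you spell out---choosing pairwise disjoint representatives, transporting the hypothesis to a surjection pair between $A_1\cup A_2$ and $B_1\cup B_2$, applying Lemma~\ref{sh17}, and setting $\mathfrak{c}_i=|C_i|$---is precisely the omitted translation. Nothing is missing.
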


\begin{lemma}\label{sh12}
From pairwise disjoint sets $A_n,B_n$ {\upshape($n\in\omega$)} and surjection pairs $\langle f_n,g_n\rangle$ \text{{\upshape($n\in\omega$)}}
between $A_n$ and $A_{n+1}\cup B_n$, one can explicitly define a set $C$ disjoint from $\bigcup_{n\in\omega}B_n$
and a surjection pair between $A_m$ and $C\cup\bigcup_{n\in\omega}B_{m+n}$ for~each $m\in\omega$.
\end{lemma}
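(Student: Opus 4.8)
The plan is to establish the remainder postulate by telescoping the given surjection pairs and then passing to a limit. Writing $\approx^\ast$ for the relation induced by surjection pairs, the hypothesis says $A_n\approx^\ast A_{n+1}\cup B_n$ for every $n$, and the goal is to produce a single ``remainder'' set $C$, disjoint from every $B_n$, with a surjection pair between $A_m$ and $C\cup\bigcup_{n\in\omega}B_{m+n}$ for all $m$. First I would build the finite stages explicitly: composing $f_m,f_{m+1},\dots,f_{m+k-1}$, pushing the part of $A_m$ that lands in the next $A$ one level up and leaving the part that lands in a $B$ fixed, yields a partial surjection from $A_m$ onto $A_{m+k}\cup\bigcup_{j<k}B_{m+j}$; dually, iterating $g_{m+k-1},\dots,g_m$ yields a partial surjection from $A_{m+k}\cup\bigcup_{j<k}B_{m+j}$ onto $A_m$. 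Hence $A_m\approx^\ast A_{m+k}\cup\bigcup_{j<k}B_{m+j}$ for every $k$, which is the finite approximation to the statement (essentially Truss's weaker remainder).

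The set $C$ should be the ``stable core'' that is never spun off into any $B_n$. Concretely, inside $A_m$ the backward composite $g_m\circ\cdots\circ g_{m+k-1}$ sends $A_{m+k}$ onto a subset $R^{(m)}_k\subseteq A_m$, and these subsets decrease with $k$; the natural candidate for the remainder at level $m$ is $C_m=\bigcap_{k\in\omega}R^{(m)}_k$, the elements of $A_m$ reachable from arbitrarily deep levels of the tower. To obtain a single $C$ that works for every $m$, I would show that $\langle f_m,g_m\rangle$ restricts to a surjection pair between $C_m$ and $C_{m+1}$, since deleting $B_m$ does not change what survives to infinity, so that all the $C_m$ are mutually $\approx^\ast$-equivalent; one then fixes $C:=C_0$ and composes with these equivalences.

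With $C$ in hand, the two required partial surjections are assembled from the finite stages together with the infinitary gluing already available. For the backward direction I would split $C\cup\bigcup_{n\in\omega}B_{m+n}$ according to which $B_{m+j}$, or the core, a point belongs to, and apply the appropriate initial composite of the $g_n$; surjectivity onto $A_m$ follows because every point of $A_m$ either lies in $C_m$ or is hit by some finite backward composite. For the forward direction, each $a\in A_m$ either eventually lands in some $B_{m+j}$ under the forward composites, in which case it is sent there, or stays in the tower forever and is sent into the core. To turn the resulting countable family of partial surjections, one contributing each $B_{m+j}$ and one contributing $C$, into a single partial surjection I would invoke Lemma~\ref{sh02}, with Corollary~\ref{sh03} handling the self-referential core part, exactly as in the proof of Lemma~\ref{sh04}.

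The hard part will be the treatment of the core $C$ and its interaction with the \emph{partiality} of the $f_n$ and $g_n$: because the maps are only partial surjections, the naive ``intersection of images'' need not be preserved by the maps, and one must ensure that the glued forward map is genuinely onto all of $C$ while the glued backward map is genuinely onto all of $A_m$. I expect to pin $C$ down not as a bare intersection but as the Knaster fixed point (Lemma~\ref{sh01}) of the isotone operator on $\scrP(A_m)$ sending a set $S$ to the part of $A_m$ that the tower maps back onto $S$; taking the greatest fixed point builds surjectivity onto $C$ into the definition, and lets Corollary~\ref{sh03} supply the countable iteration cleanly, mirroring the fixed-point bookkeeping with $X$ and $Y$ already used in Lemma~\ref{sh04}.
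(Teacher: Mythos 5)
Your finite-stage step is correct and matches the paper's closing argument (the paper extends $f_{m+l},g_{m+l}$ by the identity on the $B$'s already produced and telescopes, obtaining a surjection pair between $A_m$ and $A_{m+n+1}\cup\bigcup_{k\leqslant n}B_{m+k}$). But the crux of the lemma is the construction of the single remainder $C$, and there your proposal has a genuine gap. The claim that $\langle f_m,g_m\rangle$ restricts to a surjection pair between $C_m=\bigcap_k R^{(m)}_k$ and $C_{m+1}$ is false in general: for $x\in C_m$, the witnesses $y_k\in g_m^{-1}[\{x\}]$ coming from depth $k$ may be different for each $k$, and the decreasing sequence of nonempty sets $g_m^{-1}[\{x\}]\cap R^{(m+1)}_k$ can have empty intersection when the fiber is infinite (each $y_k$ is reachable from depth exactly $k$ and then dies), so $x\in C_m\setminus g_m[C_{m+1}]$. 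You correctly sense this, but your proposed repair --- replacing the bare intersection by a Knaster greatest fixed point of an unspecified operator on $\scrP(A_m)$ --- does not fill the hole: Lemma~\ref{sh01} together with Corollary~\ref{sh03} manufactures a surjection from an inflationary set $X\subseteq i(X)$ onto an \emph{increasing} union at a single level, whereas what is needed here is a \emph{decreasing} limit across infinitely many levels, with surjection pairs linking consecutive cores; no single-level isotone operator is exhibited (or evidently exists) whose fixed point yields one $C$ working simultaneously for all $m$.

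What the paper actually does, and what is missing from your sketch, is to apply the full refinement output of Lemma~\ref{sh04} once per level, not merely its fixed-point bookkeeping. At stage $n$ one has a surjection pair between $A_{n+1}\cup B_n$ and $\tilde{A}_n\cup P_n$, and Lemma~\ref{sh04} trades a piece $Q_n\subseteq B_n$ absorbed into the $A$-side (via $q_n:A_{n+1}'\to A_{n+1}'\cup Q_n$) against a piece $P_{n+1}\subseteq A_{n+1}$ absorbed into the $B$-side, while replacing $A_{n+1}$ by a copy $\tilde{A}_{n+1}$ that sits \emph{inside} $\tilde{A}_n\cup P_n$. This internalization is the key point your approach lacks: it makes the sets $D_m=\tilde{A}_m\cup\bigcup_{n\in\omega}P_{m+n}$ literally decreasing with exact successive differences $D_m\setminus D_{m+1}=\tilde{B}_m$, so that $C=\bigcap_{m\in\omega}D_m$ satisfies $D_m=C\cup\bigcup_{n\in\omega}\tilde{B}_{m+n}$ on the nose --- the intersection works for these \emph{derived} sets precisely because of the exactness that fails for your image sets $R^{(m)}_k$. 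The accumulated correction sets $P_{m+n+1}$ and $Q_{m+n}$ are then reabsorbed into $A_m$ using your (correct) finite telescopes together with Lemma~\ref{sh02}. Without the level-by-level exchange supplied by Lemma~\ref{sh04}, the elements that must be traded between the $A$-tower and the $B$'s have nowhere to go, and the surjectivity of the glued maps onto $C$ and onto $A_m$ cannot be guaranteed.
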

\begin{proof}
We define sets $\tilde{A}_n,\tilde{B}_n,A_n',B_n',P_n,Q_n$ and functions $\tilde{f}_n,\tilde{g}_n,f_n',g_n',p_n,q_n$ as follows.
Let $\tilde{A}_0=A_0'=A_0$, $P_0=\varnothing$, and $\tilde{f}_0=\tilde{g}_0=\mathrm{id}_{A_0}$.
Assume $\tilde{A}_n,A_n',P_n,\tilde{f}_n,\tilde{g}_n$ have been defined so that
$\tilde{A}_n\cap(P_n\cup\bigcup_{k>n}A_k)=\varnothing$, $A_n'\cap P_n=\varnothing$, $A_n=A_n'\cup P_n$,
and $\langle\tilde{f}_n,\tilde{g}_n\rangle$ is a surjection pair between $\tilde{A}_n$ and $A_n'$.
Since $\langle(\tilde{g}_n\cup\mathrm{id}_{P_n})\circ g_n,f_n\circ(\tilde{f}_n\cup\mathrm{id}_{P_n})\rangle$ is a surjection pair
between $A_{n+1}\cup B_n$ and $\tilde{A}_n\cup P_n$, it follows from Lemma~\ref{sh04} that
one can explicitly partition $A_{n+1},B_n,\tilde{A}_n\cup P_n$ as
\begin{align*}
A_{n+1}             & =A_{n+1}'\cup P_{n+1}\\
B_n                 & =B_n'\cup Q_n\\
\tilde{A}_n\cup P_n & =\tilde{A}_{n+1}\cup\tilde{B}_n
\end{align*}
and define partial surjections $q_n:A_{n+1}'\to A_{n+1}'\cup Q_n$ and $p_n:B_n'\to B_n'\cup P_{n+1}$,
a~surjection pair $\langle\tilde{f}_{n+1},\tilde{g}_{n+1}\rangle$ between $\tilde{A}_{n+1}$ and $A_{n+1}'$,
and a surjection pair $\langle f_n',g_n'\rangle$ between $\tilde{B}_n$ and $B_n'$.
Clearly, $\tilde{A}_{n+1}\cap(P_{n+1}\cup\bigcup_{k>n+1}A_k)=\varnothing$.
An easy induction shows that, for all $m,n\in\omega$,
\begin{equation}\label{sh13}
\tilde{A}_{m+n+1},\tilde{B}_{m+n}\subseteq\tilde{A}_m\cup\bigcup_{k\leqslant n}P_{m+k}.
\end{equation}
Since $\tilde{A}_{n+1}\cap\tilde{B}_n=\varnothing$ for all $n\in\omega$,
it follows from \eqref{sh13} that $\tilde{B}_n$ ($n\in\omega$) are pairwise disjoint.
Also, by \eqref{sh13}, $\bigcup_{n\in\omega}\tilde{B}_n\subseteq\bigcup_{n\in\omega}A_n$,
and hence $\bigcup_{n\in\omega}\tilde{B}_n\cap\bigcup_{n\in\omega}B_n=\varnothing$.

For each $m\in\omega$, let
\[
D_m=\tilde{A}_m\cup\bigcup_{n\in\omega}P_{m+n}.
\]
Clearly, for every $m\in\omega$, $D_{m+1}\cap\tilde{B}_m=\varnothing$ and $D_m=D_{m+1}\cup\tilde{B}_m$. Now, we define
\[
C=\bigcap_{m\in\omega}D_m.
\]
Since $C\subseteq D_0\subseteq\bigcup_{n\in\omega}A_{n}$, $C\cap\bigcup_{n\in\omega}B_n=\varnothing$.
Note also that $C\cap\bigcup_{n\in\omega}\tilde{B}_n=\varnothing$, and for every $m\in\omega$,
\begin{equation}\label{sh14}
D_m=C\cup\bigcup_{n\in\omega}\tilde{B}_{m+n}.
\end{equation}

Let $m\in\omega$. We conclude the proof by explicitly defining a surjection pair
between $A_m$ and $C\cup\bigcup_{n\in\omega}B_{m+n}$ as follows. By \eqref{sh14},
\[
\mathrm{id}_C\cup\bigcup_{n\in\omega}g_{m+n}':C\cup\bigcup_{n\in\omega}B_{m+n}\to D_m
\]
is a partial surjection, so is
\[
(\tilde{f}_m\cup\mathrm{id}_{P_m})\circ(\mathrm{id}_C\cup\bigcup_{n\in\omega}g_{m+n}'):C\cup\bigcup_{n\in\omega}B_{m+n}\to A_m.
\]
Also, by \eqref{sh14},
\[
\mathrm{id}_C\cup\bigcup_{n\in\omega}f_{m+n}'\cup\bigcup_{n\in\omega}\mathrm{id}_{Q_{m+n}}:
D_m\cup\bigcup_{n\in\omega}Q_{m+n}\to C\cup\bigcup_{n\in\omega}B_{m+n}
\]
is a partial surjection, so is
\begin{multline*}
(\mathrm{id}_C\cup\bigcup_{n\in\omega}f_{m+n}'\cup\bigcup_{n\in\omega}\mathrm{id}_{Q_{m+n}})
\circ(\tilde{g}_m\cup\bigcup_{n\in\omega}\mathrm{id}_{P_{m+n}}\cup\bigcup_{n\in\omega}\mathrm{id}_{Q_{m+n}}):\\
A_m\cup\bigcup_{n\in\omega}P_{m+n+1}\cup\bigcup_{n\in\omega}Q_{m+n}\to C\cup\bigcup_{n\in\omega}B_{m+n}.
\end{multline*}
Therefore, it is sufficient to explicitly define a partial surjection from $A_m$
onto $A_m\cup\bigcup_{n\in\omega}P_{m+n+1}\cup\bigcup_{n\in\omega}Q_{m+n}$.
By Lemma~\ref{sh02}, it suffices to explicitly define partial surjections
from $A_m$ onto $A_m\cup P_{m+n+1}$ and from $A_m$ onto $A_m\cup Q_{m+n}$ for~each $n\in\omega$.

Let $n\in\omega$. For each $l\leqslant n$, let
\begin{align*}
f_{m+l}'' & =f_{m+l}\cup\bigcup_{k<l}\mathrm{id}_{B_{m+k}},\\
g_{m+l}'' & =g_{m+l}\cup\bigcup_{k<l}\mathrm{id}_{B_{m+k}}.
\end{align*}
Then, for each $l\leqslant n$, $\langle f_{m+l}'',g_{m+l}''\rangle$ is a surjection pair between
$A_{m+l}\cup\bigcup_{k<l}B_{m+k}$ and $A_{m+l+1}\cup\bigcup_{k\leqslant l}B_{m+k}$,
so $\langle f_{m+n}''\circ f_{m+n-1}''\circ\dots\circ f_m'',g_m''\circ g_{m+1}''\circ\dots\circ g_{m+n}''\rangle$
is a surjection pair between $A_m$ and $A_{m+n+1}\cup\bigcup_{k\leqslant n}B_{m+k}$,
from which (as well as $p_{m+n},q_{m+n}$) one can explicitly define partial surjections
from $A_m$ onto $A_m\cup P_{m+n+1}$ and from $A_m$ onto $A_m\cup Q_{m+n}$.
\end{proof}

The next corollary immediately follows from Lemma~\ref{sh12},
which is a generalization of \cite[Lemma~3.3]{Truss1984}.

\begin{corollary}\label{sh19}
{\upshape($\mathsf{AC}_\omega$)} The remainder postulate holds for surjective cardinals, that is,
for all cardinals $\mathfrak{a}_n,\mathfrak{b}_n$ {\upshape($n\in\omega$)},
if $\mathfrak{a}_n=^\ast\mathfrak{a}_{n+1}+\mathfrak{b}_n$ for all $n\in\omega$,
then there is a cardinal $\mathfrak{c}$ such that
\[
\mathfrak{a}_m=^\ast\mathfrak{c}+\sum_{n\in\omega}\mathfrak{b}_{m+n}\text{ for all }m\in\omega.
\]
\end{corollary}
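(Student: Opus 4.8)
The plan is to deduce the corollary from Lemma~\ref{sh12} by passing from cardinals to concrete set representatives, the only new ingredient being $\mathsf{AC}_\omega$, which is used to make the infinitely many choices that Lemma~\ref{sh12} leaves implicit and to guarantee that the infinite sum $\sum_{n\in\omega}\mathfrak{b}_{m+n}$ is well defined.

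First I would fix representatives. Using $\mathsf{AC}_\omega$, choose for each $n\in\omega$ a set of cardinality $\mathfrak{a}_n$ and a set of cardinality $\mathfrak{b}_n$, and then tag them (for instance, replacing a set $S$ by $\{2n\}\times S$ or by $\{2n+1\}\times S$) so as to obtain pairwise disjoint sets $A_n$ and $B_n$ with $|A_n|=\mathfrak{a}_n$ and $|B_n|=\mathfrak{b}_n$. Since $A_{n+1}$ and $B_n$ are disjoint, $A_{n+1}\cup B_n$ has cardinality $\mathfrak{a}_{n+1}+\mathfrak{b}_n$; and since $=^\ast$, being defined via partial surjections, holds between any two sets of the relevant cardinalities (one transports the witnessing partial surjections by composing with bijections), the hypothesis $\mathfrak{a}_n=^\ast\mathfrak{a}_{n+1}+\mathfrak{b}_n$ guarantees that there are partial surjections in both directions between $A_n$ and $A_{n+1}\cup B_n$. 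Appealing to $\mathsf{AC}_\omega$ once more, I would select for each $n$ a surjection pair $\langle f_n,g_n\rangle$ between $A_n$ and $A_{n+1}\cup B_n$.

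This is precisely the input of Lemma~\ref{sh12}. Applying it produces a set $C$ disjoint from $\bigcup_{n\in\omega}B_n$ together with, for each $m\in\omega$, a surjection pair between $A_m$ and $C\cup\bigcup_{n\in\omega}B_{m+n}$. Setting $\mathfrak{c}=|C|$, each such surjection pair witnesses $\mathfrak{a}_m=^\ast|C\cup\bigcup_{n\in\omega}B_{m+n}|$. It then remains to identify the cardinality on the right: because $C$ is disjoint from $\bigcup_{n\in\omega}B_{m+n}$ and the sets $B_{m+n}$ are pairwise disjoint with $|B_{m+n}|=\mathfrak{b}_{m+n}$, the union $C\cup\bigcup_{n\in\omega}B_{m+n}$ is a disjoint union exhibiting the cardinality $\mathfrak{c}+\sum_{n\in\omega}\mathfrak{b}_{m+n}$. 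Hence $\mathfrak{a}_m=^\ast\mathfrak{c}+\sum_{n\in\omega}\mathfrak{b}_{m+n}$ for every $m$, as required.

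There is no deep obstacle here: all the combinatorial work has already been absorbed into Lemma~\ref{sh12}, so the remaining argument is essentially bookkeeping. The one point that genuinely requires attention---and the reason the hypothesis $\mathsf{AC}_\omega$ appears---is that both the selection of the surjection pairs $\langle f_n,g_n\rangle$ and the very meaning of the infinite sum $\sum_{n\in\omega}\mathfrak{b}_{m+n}$ (namely, that it agrees with $|\bigcup_{n\in\omega}B_{m+n}|$ and is independent of the chosen representatives) rely on making countably many choices. In the absence of even $\mathsf{AC}_\omega$ neither step is available, which is exactly why this corollary, unlike the finite refinement postulate established choice-freely in Corollary~\ref{sh18}, is stated under $\mathsf{AC}_\omega$.
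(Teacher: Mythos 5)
Your proof is correct and follows essentially the same route as the paper, which derives the corollary directly from Lemma~\ref{sh12} after using $\mathsf{AC}_\omega$ to fix pairwise disjoint representatives and to select the surjection pairs $\langle f_n,g_n\rangle$. Your additional remark that $\mathsf{AC}_\omega$ also underwrites the well-definedness of the infinite sum $\sum_{n\in\omega}\mathfrak{b}_{m+n}$ is a correct and apt observation about why the hypothesis appears here but not in Corollary~\ref{sh18}.
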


\begin{theorem}
{\upshape($\mathsf{AC}_\omega$)} Surjective cardinals form a surjective cardinal algebra.
\end{theorem}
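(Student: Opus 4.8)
The plan is to verify the seven postulates I--V, VI$'$, and VII directly for the system $\langle A,{+},{\sum}\rangle$ whose elements are the surjective cardinals, where $[\mathfrak{a}]+[\mathfrak{b}]=[\mathfrak{a}+\mathfrak{b}]$ and $\sum_{n\in\omega}[\mathfrak{a}_n]=[\sum_{n\in\omega}\mathfrak{a}_n]$, the inner operations being ordinary cardinal addition of pairwise disjoint representatives. First I would check that these operations are well defined on $=^\ast$-classes. For finite addition this is choice-free: the union of a partial surjection from $A_1$ onto $B_1$ with one from $A_2$ onto $B_2$ (representatives taken disjoint) is a partial surjection from $A_1\cup A_2$ onto $B_1\cup B_2$, so $\mathfrak{a}_1=^\ast\mathfrak{b}_1$ and $\mathfrak{a}_2=^\ast\mathfrak{b}_2$ yield $\mathfrak{a}_1+\mathfrak{a}_2=^\ast\mathfrak{b}_1+\mathfrak{b}_2$. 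For the infinite sum the same union construction works once one has, for each $n$, a surjection pair witnessing $\mathfrak{a}_n=^\ast\mathfrak{b}_n$; selecting these pairs simultaneously is exactly where $\mathsf{AC}_\omega$ is invoked, and it is likewise what lets one choose pairwise disjoint representatives of a countable family of cardinals so that $\sum_{n\in\omega}\mathfrak{a}_n$ is defined at all.

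With the operations in hand, postulates I and II (finite and infinite closure) are immediate, and V holds with $0=[0]$ the class of the empty set. Postulates III and IV are inherited from genuine equalities of cardinals: $\sum_{n\in\omega}\mathfrak{a}_n=\mathfrak{a}_0+\sum_{n\in\omega}\mathfrak{a}_{n+1}$ is a reindexing of a disjoint union, while $\sum_{n\in\omega}(\mathfrak{a}_n+\mathfrak{b}_n)=\sum_{n\in\omega}\mathfrak{a}_n+\sum_{n\in\omega}\mathfrak{b}_n$ is the canonical rearrangement $\bigcup_{n\in\omega}(A_n\cup B_n)\cong\bigcup_{n\in\omega}A_n\cup\bigcup_{n\in\omega}B_n$. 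Both bijections are explicit and require no choice beyond what was already spent to define the sums, and since a genuine bijection is in particular a surjection pair, these identities descend verbatim to surjective cardinals.

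This leaves the two substantive postulates, which are precisely the content of the corollaries already established: the finite refinement postulate VI$'$ is Corollary~\ref{sh18}, and the remainder postulate VII is Corollary~\ref{sh19}. I expect VII to be the main obstacle, and indeed it is the one place where the heavy machinery of this section is used. To apply Lemma~\ref{sh12} one must first convert the cardinal hypotheses $\mathfrak{a}_n=^\ast\mathfrak{a}_{n+1}+\mathfrak{b}_n$ into pairwise disjoint sets $A_n,B_n$ together with a coherent system of surjection pairs $\langle f_n,g_n\rangle$ between $A_n$ and $A_{n+1}\cup B_n$---coherent in that the copy of $A_{n+1}$ used at stage $n$ is literally the set used as the domain at stage $n+1$---and assembling this single $\omega$-indexed choice is again where $\mathsf{AC}_\omega$ is consumed. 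Once all the postulates are checked, the system $\langle A,{+},{\sum}\rangle$ satisfies I--V, VI$'$, and VII, which is by definition a surjective cardinal algebra, completing the proof.
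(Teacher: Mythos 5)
Your proof is correct and follows essentially the same route as the paper, which simply notes that postulates I--V hold obviously and cites Corollaries~\ref{sh18} and~\ref{sh19} for VI$'$ and VII. Your additional care about well-definedness of the operations and the precise points where $\mathsf{AC}_\omega$ is consumed (choosing representatives and surjection pairs for infinite sums, and assembling the coherent system needed to invoke Lemma~\ref{sh12}) is a sound elaboration of what the paper leaves implicit.
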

\begin{proof}
The postulates I--V hold obviously, and the postulates VI' and VII hold for surjective cardinals
by Corollaries~\ref{sh18} and~\ref{sh19}, respectively.
\end{proof}

\begin{corollary}\label{sh26}
The cancellation law holds for surjective cardinals, that is,
for all cardinals $\mathfrak{a},\mathfrak{b}$ and all nonzero natural numbers $m$,
if $m\cdot\mathfrak{a}=^\ast m\cdot\mathfrak{b}$, then $\mathfrak{a}=^\ast\mathfrak{b}$.
\end{corollary}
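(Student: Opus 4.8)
The plan is to deduce the corollary from the abstract cancellation law for surjective cardinal algebras, and then to discharge the countable-choice hypothesis by the ``explicit definability'' device. First, by the preceding theorem, surjective cardinals form a surjective cardinal algebra, granting $\mathsf{AC}_\omega$; and the cancellation law ``$m\cdot a=m\cdot b$ implies $a=b$'' holds in any surjective cardinal algebra under $\mathsf{DC}_\omega$ (see \cite[p.~157]{Rao1992} or \cite[Proposition~2.9]{Wehrung1992}). Since $\mathsf{DC}_\omega$ implies $\mathsf{AC}_\omega$, combining these two facts already establishes that $m\cdot\mathfrak{a}=^\ast m\cdot\mathfrak{b}$ implies $\mathfrak{a}=^\ast\mathfrak{b}$ in $\mathsf{ZF}+\mathsf{DC}_\omega$, for every nonzero $m\in\omega$.

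The essential remaining task is to remove the appeal to $\mathsf{DC}_\omega$ and obtain a proof in plain $\mathsf{ZF}$. Here I would invoke the device discussed in \cite[pp.~240--242]{Tarski1949b} and \cite[p.~166]{Truss1984}. The point is that each postulate was verified for surjective cardinals in an \emph{explicit} form: the finite refinement postulate through Lemma~\ref{sh17} (Corollary~\ref{sh18}) and the remainder postulate through Lemma~\ref{sh12} (Corollary~\ref{sh19}), each phrased as ``one can explicitly define.'' The abstract cancellation argument proceeds by a recursion along $\omega$ whose individual steps are merely applications of the finite refinement and remainder postulates. Accordingly, starting from a single concrete witness of the hypothesis---fixed representative sets $A,B$ with $|A|=\mathfrak{a}$ and $|B|=\mathfrak{b}$, together with an explicit surjection pair between $m\times A$ and $m\times B$---I would replace every postulate-application in the recursion by the corresponding explicitly definable witness. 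The sequence of objects that the abstract proof extracts via $\mathsf{DC}_\omega$ then becomes a genuine definition by recursion from this one datum, requiring no choice, and reading off the final surjection pair between $A$ and $B$ yields $\mathfrak{a}=^\ast\mathfrak{b}$.

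The main obstacle I anticipate is the bookkeeping needed to confirm that the abstract cancellation proof uses the postulates only through their explicit witnesses and that the ensuing recursion is uniformly definable from the initial surjection pair. In particular, our verification of the remainder postulate in Corollary~\ref{sh19} itself used $\mathsf{AC}_\omega$, but \emph{only} to amalgamate countably many separately-chosen witnesses into a single sequence; the crux of the device is that once the entire construction is driven by one fixed instance, all the required surjection pairs are simultaneously definable from that instance via parameter-free class functions, so no countable choice is invoked at any stage. Making this dependency tracking precise is the delicate part, but it demands no combinatorial input beyond Lemmas~\ref{sh17} and~\ref{sh12}, and amounts exactly to the Tarski--Truss observation that in cardinal \emph{arithmetic} the choice principle used in the algebraic proof can always be eliminated.
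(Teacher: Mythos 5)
Your proposal is correct and follows essentially the same route as the paper: cite the abstract cancellation law for surjective cardinal algebras under $\mathsf{DC}_\omega$ (Bhaskara Rao--Shortt, Wehrung), then eliminate the choice principle by rerunning the algebraic argument with the explicitly definable witnesses supplied by Lemmas~\ref{sh17} and~\ref{sh12}, exactly the Tarski--Truss device the paper invokes. Your additional remarks on dependency tracking---that the recursion becomes uniformly definable from the single initial surjection pair, so $\mathsf{AC}_\omega$ in Corollary~\ref{sh19} was only needed to amalgamate independently chosen witnesses---are a faithful elaboration of what the paper's proof leaves implicit.
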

\begin{proof}
It is noted by Bhaskara Rao and Shortt \cite[p.~157]{Rao1992} and proved by Wehrung \cite[Proposition~2.9]{Wehrung1992}
(even for a more general kind of algebra) that, assuming $\mathsf{DC}_\omega$, the cancellation law holds for
any surjective cardinal algebra. Go through the algebraic proof of the cancellation law for surjective cardinals,
transfer each intermediate step to the corresponding explicit-definability version using Lemmas~\ref{sh17} and~\ref{sh12},
and finally a choice-free proof of the cancellation law will be obtained.
\end{proof}

In the next section, we show that the refinement postulate may fail for surjective cardinals,
even if $\mathsf{DC}_\kappa$ is assumed, where $\kappa$ is a fixed aleph.

\section{Surjective cardinals may not form a cardinal algebra}
Let $\kappa$ be an aleph. We shall prove that it is consistent with $\mathsf{ZF}+\mathsf{DC}_\kappa$
that surjective cardinals do not form a cardinal algebra.
We shall employ the method of permutation models.

We refer the reader to~\cite[Chap.~8]{Halbeisen2017} or~\cite[Chap.~4]{Jech1973}
for an introduction to the theory of permutation models.
Permutation models are not models of $\mathsf{ZF}$;
they are models of $\mathsf{ZFA}$ (the Zermelo--Fraenkel set theory with atoms).
We shall construct a permutation model in which $\mathsf{DC}_\kappa$ holds but
surjective cardinals do not form a cardinal algebra. Then, by a transfer theorem of Pincus \cite[Theorem~4]{Pincus1977},
we conclude that $\mathsf{ZF}+\mathsf{DC}_\kappa$ cannot prove that surjective cardinals form a cardinal algebra.

We work in $\mathsf{ZFC}$, and construct the set $A$ of atoms as follows.
\[
A=\bigcup_{n\in\omega}A_n,
\]
where
\[
A_n=\{\langle\alpha,i,n\rangle\mid\alpha<\kappa^+\text{ and }i<2\}.
\]
Let $\mathcal{G}$ be the group of all permutations $\tau$ of $A$ such that
for each $\alpha<\kappa^+$ there is a permutation $p_\alpha$ of $\{0,1\}$ such that
$\tau(\langle\alpha,i,n\rangle)=\langle\alpha,p_\alpha(i),n\rangle$ for all $i<2$ and all $n\in\omega$.
In other words, $\mathcal{G}$ is the group of all permutations of $A$ that preserve
the tree structure of $A\cup\kappa^+$ illustrated in Figure~\ref{sh20}.
Then $x$ belongs to the permutation model $\mathcal{V}$ determined by $\mathcal{G}$ if and only if
$x\subseteq\mathcal{V}$ and $x$ has a \emph{support} of cardinality~$\leqslant\kappa$, that is, a subset $E\subseteq A$
with $|E|\leqslant\kappa$ such that every permutation $\tau\in\mathcal{G}$ fixing $E$ pointwise also fixes $x$.
Note that, for every $n\in\omega$, $A_n$ is fixed by every permutation in $\mathcal{G}$, so $A_n\in\mathcal{V}$.

\begin{figure}[htb]
\begin{tikzpicture}
\foreach \a in {0.5,2.5,4.5,7.5}
  \fill (\a,0) circle (1pt);
\foreach \b in {0,1,2,3,4,5,7,8}
\foreach \c in {-2,-4,-6}
  \fill (\b,\c) circle (1pt);
\foreach \d in {0,-2,-4,-6}
  \draw[dotted] (5.5,\d)--(6.5,\d);
\foreach \d in {0,-2,-4,-6}
  \draw[dotted] (8.5,\d)--(9.5,\d);
\foreach \b in {0,1,2,3,4,5,7,8}
  \draw[dotted] (\b,-8)--(\b,-7);
  \draw[->] (0,-2)--(0.5,0);
  \draw[->] (1,-2)--(0.5,0);
  \draw[->] (2,-2)--(2.5,0);
  \draw[->] (3,-2)--(2.5,0);
  \draw[->] (4,-2)--(4.5,0);
  \draw[->] (5,-2)--(4.5,0);
  \draw[->] (7,-2)--(7.5,0);
  \draw[->] (8,-2)--(7.5,0);
\foreach \b in {0,1,2,3,4,5,7,8}
  \draw[->] (\b,-4)--(\b,-2);
\foreach \b in {0,1,2,3,4,5,7,8}
  \draw[->] (\b,-6)--(\b,-4);
\foreach \b in {0,1,2,3,4,5,7,8}
  \draw[->] (\b,-7)--(\b,-6);
\foreach \d in {0,-2,-4,-6}
  \draw (4.75,\d) ellipse (150pt and 15pt);
  \node at (9.6,0.6) {$\kappa^+$};
  \node at (9.6,-1.4) {$A_0$};
  \node at (9.6,-3.4) {$A_1$};
  \node at (9.6,-5.4) {$A_2$};
  \node at (0.7,0.1) {$0$};
  \node at (2.7,0.1) {$1$};
  \node at (4.7,0.1) {$2$};
  \node at (7.7,0.1) {$\alpha$};
\end{tikzpicture}
\caption{The tree structure of $A\cup\kappa^+$.}\label{sh20}
\end{figure}
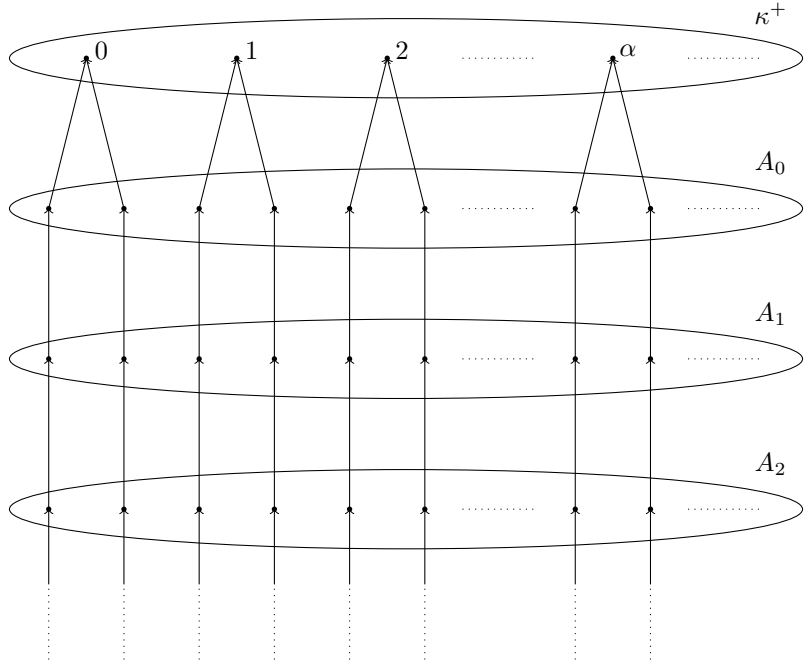

\begin{lemma}\label{sh21}
For every $\beta\leqslant\kappa$ and every function $g:\beta\to\mathcal{V}$, we have $g\in\mathcal{V}$.
\end{lemma}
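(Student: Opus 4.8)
The plan is to verify directly the two clauses in the definition of membership in $\mathcal{V}$: I must show that $g\subseteq\mathcal{V}$ and that $g$ admits a support $E\subseteq A$ with $|E|\leqslant\kappa$. Throughout I work in the ground model, where the axiom of choice is available, so I am free to make the selections needed below.

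First I would check $g\subseteq\mathcal{V}$. Writing $g=\{\langle\xi,g(\xi)\rangle\mid\xi<\beta\}$, each element of $g$ is an ordered pair whose first coordinate is an ordinal and whose second coordinate lies in $\mathcal{V}$ by hypothesis. A routine transfinite induction shows that every ordinal (indeed every pure set) belongs to $\mathcal{V}$ with the empty support, since it is fixed by every permutation in $\mathcal{G}$. Moreover $\mathcal{V}$ is closed under the formation of ordered pairs: if $a,b\in\mathcal{V}$ have supports $E_a,E_b$, then $\{a\}$, $\{a,b\}$, and hence $\langle a,b\rangle=\{\{a\},\{a,b\}\}$ all lie in $\mathcal{V}$, with $E_a\cup E_b$ serving as a support of $\langle a,b\rangle$. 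Consequently $\langle\xi,g(\xi)\rangle\in\mathcal{V}$ for every $\xi<\beta$, so $g\subseteq\mathcal{V}$.

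Next I would produce the support. For each $\xi<\beta$, using choice in the ground model, I fix a support $E_\xi\subseteq A$ of $g(\xi)$ with $|E_\xi|\leqslant\kappa$, and set $E=\bigcup_{\xi<\beta}E_\xi$. Since $\beta\leqslant\kappa$ and each $|E_\xi|\leqslant\kappa$, the identity $\kappa\cdot\kappa=\kappa$ for the infinite aleph $\kappa$ yields $|E|\leqslant\kappa$. To see that $E$ is a support of $g$, suppose $\tau\in\mathcal{G}$ fixes $E$ pointwise. Then $\tau$ fixes each $E_\xi$ pointwise, hence fixes each $g(\xi)$, and $\tau$ fixes every ordinal $\xi$; therefore $\tau$ fixes each pair $\langle\xi,g(\xi)\rangle$, and so
\[
\tau(g)=\{\tau(\langle\xi,g(\xi)\rangle)\mid\xi<\beta\}=\{\langle\xi,g(\xi)\rangle\mid\xi<\beta\}=g.
\]
Thus $E$ is a support of $g$ of cardinality $\leqslant\kappa$, and combined with $g\subseteq\mathcal{V}$ this gives $g\in\mathcal{V}$.

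The argument is essentially bookkeeping, and I do not anticipate a serious obstacle; the only point requiring care is the cardinality bound on $E$, which is exactly where the hypothesis $\beta\leqslant\kappa$ is used, together with the ground-model choice needed to select the family $\langle E_\xi\rangle_{\xi<\beta}$ of supports. It is worth flagging that this lemma is precisely the closure-under-$\kappa$-sequences property of $\mathcal{V}$ that will subsequently be leveraged to verify $\mathsf{DC}_\kappa$ in the permutation model.
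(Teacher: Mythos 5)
Your proof is correct and follows essentially the same route as the paper: choose (in the ground model, using $\mathsf{AC}$) a support $E_\xi$ of cardinality $\leqslant\kappa$ for each $g(\xi)$, take $E=\bigcup_{\xi<\beta}E_\xi$, note $|E|\leqslant\kappa$ since $\beta\leqslant\kappa$, and check that $E$ supports $g$. The only difference is that you spell out the routine verification that $g\subseteq\mathcal{V}$ (ordinals are fixed by every permutation, and $\mathcal{V}$ is closed under ordered pairs), which the paper leaves tacit.
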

\begin{proof}
For all $\alpha<\beta$, $g(\alpha)\in\mathcal{V}$ has a support $E_\alpha$ with $|E_\alpha|\leqslant\kappa$.
Let $E=\bigcup_{\alpha<\beta}E_\alpha$. Then $|E|\leqslant\kappa$ and $E$ supports each $g(\alpha)$, $\alpha<\beta$.
Thus, $E$ supports $g$, so $g\in\mathcal{V}$.
\end{proof}

\begin{lemma}\label{sh22}
In $\mathcal{V}$, $\mathsf{DC}_\kappa$ holds.
\end{lemma}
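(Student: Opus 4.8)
The plan is to derive $\mathsf{DC}_\kappa$ in $\mathcal{V}$ directly from the closure property of Lemma~\ref{sh21}, carrying out the actual construction in the ground universe $V$ (where the full axiom of choice is available) and using Lemma~\ref{sh21} only to certify that the objects produced along the way belong to $\mathcal{V}$. So let $S,R\in\mathcal{V}$ be an instance of the hypothesis of $\mathsf{DC}_\kappa$ as interpreted in $\mathcal{V}$: for every $\alpha<\kappa$ and every $\alpha$-sequence $s\in\mathcal{V}$ of elements of $S$, there is some $y\in S$ with $sRy$. Note that $S\subseteq\mathcal{V}$, since $S\in\mathcal{V}$. First I would fix, in $V$, a well-ordering $\prec$ of $S$.

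Next I would construct a function $f\colon\kappa\to S$ by transfinite recursion in $V$. Suppose $f\rstr\alpha=\langle x_\xi\rangle_{\xi<\alpha}$ has been defined for some $\alpha<\kappa$, with each $x_\xi\in S$. Since $f\rstr\alpha$ is a function from $\alpha\leqslant\kappa$ into $S\subseteq\mathcal{V}$, Lemma~\ref{sh21} gives $f\rstr\alpha\in\mathcal{V}$. Thus $f\rstr\alpha$ is an $\alpha$-sequence of elements of $S$ lying in $\mathcal{V}$, so the hypothesis supplies some $y\in S$ with $(f\rstr\alpha)Ry$; I would set $f(\alpha)$ to be the $\prec$-least such $y$. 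This yields $f\colon\kappa\to S$ with $(f\rstr\alpha)Rf(\alpha)$ for every $\alpha<\kappa$.

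Finally, applying Lemma~\ref{sh21} once more with $\beta=\kappa$ to the function $f\colon\kappa\to\mathcal{V}$ shows that $f\in\mathcal{V}$; this is the heart of the matter, and it is exactly why the permutation model was set up with supports of cardinality $\leqslant\kappa$. It remains to observe that $f$ witnesses the conclusion of $\mathsf{DC}_\kappa$ inside $\mathcal{V}$: each condition $(f\rstr\alpha)Rf(\alpha)$ asserts $\langle f\rstr\alpha,f(\alpha)\rangle\in R$, and since $R$, $f\rstr\alpha$, and $f(\alpha)$ all lie in $\mathcal{V}$, its truth is the same whether evaluated in $V$ or in $\mathcal{V}$. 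The one point requiring care is precisely this interplay between $V$ and $\mathcal{V}$ --- that each initial segment of $f$ is an element of $\mathcal{V}$, so that the $\mathcal{V}$-hypothesis is applicable at every stage, and that membership in the fixed relation $R$ is absolute --- but both facts follow at once from Lemma~\ref{sh21} and the absoluteness of $\in$. Hence all the real work has already been carried out in Lemma~\ref{sh21}, and the recursion itself is routine.
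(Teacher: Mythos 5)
Your proof is correct and follows essentially the same route as the paper: use Lemma~\ref{sh21} to see that every $\alpha$-sequence of elements of $S$ (for $\alpha<\kappa$) already lies in $\mathcal{V}$, invoke the axiom of choice in the ground universe to build $f:\kappa\to S$ with $(f\rstr\alpha)Rf(\alpha)$ for all $\alpha<\kappa$, and then apply Lemma~\ref{sh21} once more to conclude $f\in\mathcal{V}$. The paper's proof is just a terser version of yours, leaving the well-ordering, the transfinite recursion, and the absoluteness remarks implicit.
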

\begin{proof}
Let $S\in\mathcal{V}$ and let $R\in\mathcal{V}$ be a binary relation such that
for all $\alpha<\kappa$ and all $\alpha$-sequences $s\in\mathcal{V}$
of elements of $S$ there is $y\in S$ such that $sRy$. By Lemma~\ref{sh21},
for each $\alpha<\kappa$, every $\alpha$-sequence $s$ of elements of $S$ belongs to $\mathcal{V}$,
so $sRy$ for some $y\in S$. By the axiom of choice, there is a function $f:\kappa\to S$
such that $(f\rstr\alpha)Rf(\alpha)$ for every $\alpha<\kappa$.
By Lemma~\ref{sh21} again, $f\in\mathcal{V}$ and so $\mathsf{DC}_\kappa$ holds in $\mathcal{V}$.
\end{proof}

\begin{lemma}\label{sh23}
In $\mathcal{V}$, for every $m\in\omega$, there is no surjection from $A_m$ onto $A_m\cup\kappa^+$.
\end{lemma}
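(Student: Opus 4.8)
The plan is to assume toward a contradiction that $h\in\mathcal{V}$ is a surjection from $A_m$ onto $A_m\cup\kappa^+$, fix a support $E\subseteq A$ of $h$ with $|E|\leqslant\kappa$, and exploit the column-swapping symmetries available on the untouched part of the tree. Call an index $\alpha<\kappa^+$ \emph{touched} if $\langle\alpha,i,n\rangle\in E$ for some $i<2$ and $n\in\omega$; since $|E|\leqslant\kappa$, at most $\kappa$ indices are touched, so there are $\kappa^+$ \emph{untouched} indices. For an untouched $\delta$, let $\tau_\delta\in\mathcal{G}$ be the permutation that interchanges $\langle\delta,0,n\rangle$ with $\langle\delta,1,n\rangle$ for every $n\in\omega$ and fixes all other atoms. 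Then $\tau_\delta$ fixes $E$ pointwise, hence fixes $h$; unwinding the action on ordered pairs, this means $h(\tau_\delta(a))=\tau_\delta(h(a))$ for every $a\in A_m$.

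The key step is to show that, for each untouched $\delta$, the map $h$ sends the two-element set $\{\langle\delta,0,m\rangle,\langle\delta,1,m\rangle\}$ onto itself. Observe first that $\tau_\delta$ fixes every ordinal and every atom outside column $\delta$; thus it fixes pointwise all of $A_m\cup\kappa^+$ except $\langle\delta,0,m\rangle$ and $\langle\delta,1,m\rangle$, which it swaps. Now suppose $a\in A_m$ satisfies $h(a)\in\{\langle\delta,0,m\rangle,\langle\delta,1,m\rangle\}$. If $a$ were fixed by $\tau_\delta$ (equivalently, if $a$ lay outside column $\delta$), then $h(a)=h(\tau_\delta(a))=\tau_\delta(h(a))$ would force $h(a)$ to be $\tau_\delta$-fixed, which it is not. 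Hence $\tau_\delta$ moves $a$, so $a\in\{\langle\delta,0,m\rangle,\langle\delta,1,m\rangle\}$. Therefore the $h$-preimage of this pair is contained in the pair itself; since both elements of the pair lie in $\ran(h)=A_m\cup\kappa^+$ by surjectivity, the restriction of $h$ to this pair is a surjection of a two-element set onto itself.

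It follows that for every untouched $\delta$ the values $h(\langle\delta,0,m\rangle)$ and $h(\langle\delta,1,m\rangle)$ lie in $A_m$, never in $\kappa^+$. Consequently every ordinal in the range of $h$ must occur as a value $h(\langle\alpha,i,m\rangle)$ with $\alpha$ touched. But there are at most $2\kappa=\kappa$ such domain elements, so at most $\kappa$ ordinals can belong to $\ran(h)$, whereas surjectivity demands $\kappa^+\subseteq\ran(h)$. Since $\kappa<\kappa^+$, this is a contradiction, and the lemma follows.

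I expect the main obstacle to be the key step just sketched, namely confining the preimage of each untouched column-pair to that pair. Everything hinges on the fact that $\tau_\delta$ fixes all of $A_m\cup\kappa^+$ except the two atoms it interchanges, so that equivariance of $h$ under $\tau_\delta$ localizes preimages; once that is in hand, the concluding count of ordinals is immediate.
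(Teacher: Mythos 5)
Your proof is correct and follows essentially the same route as the paper's: the same column-swapping permutation $\tau_\delta$ forces $h$ to map each untouched pair $\{\langle\delta,0,m\rangle,\langle\delta,1,m\rangle\}$ onto itself, and the same counting argument then shows at most $\kappa$ ordinals can lie in $\ran(h)$, contradicting $\kappa^+\subseteq\ran(h)$. The only (cosmetic) difference is that by declaring a column touched whenever \emph{any} of its atoms meets $E$, you avoid the paper's auxiliary enlarged support $\tilde{E}$, and you phrase the key step via equivariance localizing preimages rather than the paper's contradiction of two distinct values at one argument.
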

\begin{proof}
Let $m\in\omega$. Assume towards a contradiction that there is a surjection $f\in\mathcal{V}$ from $A_m$ onto $A_m\cup\kappa^+$.
Then $f$ has a support $E$ with $|E|\leqslant\kappa$. Let
\[
\tilde{E}=E\cup\{\langle\alpha,j,m\rangle\mid j<2\text{ and }\langle\alpha,i,n\rangle\in E\text{ for some }i<2\text{ and }n\in\omega\}.
\]
Clearly, $|\tilde{E}|\leqslant\kappa$, $\tilde{E}$ is a support of $f$, and for all $j<2$ and all $\langle\alpha,i,n\rangle\in\tilde{E}$,
$\langle\alpha,j,m\rangle\in\tilde{E}$.

We claim that, for all $\alpha<\kappa^+$ such that $\{\langle\alpha,0,m\rangle,\langle\alpha,1,m\rangle\}\nsubseteq\tilde{E}$,
\begin{equation}\label{sh24}
f[\{\langle\alpha,0,m\rangle,\langle\alpha,1,m\rangle\}]=\{\langle\alpha,0,m\rangle,\langle\alpha,1,m\rangle\}.
\end{equation}
Let $j<2$ be such that $\langle\alpha,j,m\rangle\notin\tilde{E}$.
Then $\langle\alpha,i,n\rangle\notin\tilde{E}$ for all $i<2$ and all $n\in\omega$.
Let $i<2$. Since $f$ is surjective, it follows that $\langle\alpha,i,m\rangle=f(\langle\alpha',i',m\rangle)$
for some $\alpha'<\kappa^+$ and $i'<2$. If $\alpha'\neq\alpha$, then there exists a permutation
$\tau\in\mathcal{G}$ that fixes $\tilde{E}\cup\{\langle\alpha',i',m\rangle\}$ pointwise and swaps
$\langle\alpha,i,m\rangle$ with $\langle\alpha,1-i,m\rangle$, contradicting that $\tilde{E}$ is a support of $f$.
So $\alpha'=\alpha$. Hence,
\[
\{\langle\alpha,0,m\rangle,\langle\alpha,1,m\rangle\}\subseteq f[\{\langle\alpha,0,m\rangle,\langle\alpha,1,m\rangle\}],
\]
from which \eqref{sh24} follows.

By \eqref{sh24} and the surjectivity of $f$, we have $\kappa^+\subseteq f[\tilde{E}]$,
which is a contradiction since $|\tilde{E}|\leqslant\kappa$.
\end{proof}

\begin{lemma}\label{sh25}
In $\mathcal{V}$, the refinement postulate fails for surjective cardinals.
In particular, if $\mathfrak{a}=|A|$, $\mathfrak{b}=\kappa^+$, and $\mathfrak{c}_n=|A_n|$ for all $n\in\omega$,
then $\mathfrak{a}+\mathfrak{b}=^\ast\sum_{n\in\omega}\mathfrak{c}_n$ but there are no cardinals
$\mathfrak{a}_n,\mathfrak{b}_n$ {\upshape($n\in\omega$)} in $\mathcal{V}$ such that
\[
\mathfrak{a}=^\ast\sum_{n\in\omega}\mathfrak{a}_n,\quad\mathfrak{b}=^\ast\sum_{n\in\omega}\mathfrak{b}_n,
\quad\text{and}\quad\mathfrak{c}_n=^\ast\mathfrak{a}_n+\mathfrak{b}_n\text{ for all }n\in\omega.
\]
\end{lemma}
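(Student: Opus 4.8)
The plan is to treat the two assertions separately, keeping Lemma~\ref{sh23} in reserve as the tool that produces the contradiction. For the equation $\mathfrak a+\mathfrak b=^\ast\sum_{n\in\omega}\mathfrak c_n$, note first that the $A_n$ partition $A$, so $\sum_{n\in\omega}\mathfrak c_n=|A|=\mathfrak a$; hence it suffices to exhibit a surjection pair between $A\cup\kappa^+$ and $A$. One direction is the projection onto $A$. For the other, I would use the \emph{level-shift} map $g\colon A\to A\cup\kappa^+$ that sends $\langle\alpha,i,n\rangle$ to $\langle\alpha,i,n-1\rangle$ when $n\geqslant1$ and sends $\langle\alpha,i,0\rangle$ to $\alpha$. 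Its range is all of $A\cup\kappa^+$, and it commutes with every $\tau\in\mathcal G$, so $g\in\mathcal V$ with empty support, which gives the required surjection pair.

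For the negative assertion, assume toward a contradiction that witnesses $\mathfrak a_n,\mathfrak b_n\in\mathcal V$ exist, and fix pairwise disjoint representatives $X_n,Y_n\in\mathcal V$ (disjoint from $A$) of $\mathfrak a_n,\mathfrak b_n$; by Lemma~\ref{sh21} the sequences $\langle X_n\rangle$ and $\langle Y_n\rangle$ lie in $\mathcal V$. From $\mathfrak b=^\ast\sum_{n\in\omega}\mathfrak b_n$ there is in $\mathcal V$ a surjection pair between $\kappa^+$ and $\bigcup_{n\in\omega}Y_n$. The surjection from $\kappa^+$ well-orders $\bigcup_{n\in\omega}Y_n$ inside $\mathcal V$ (order points by their least preimage), and comparing the two surjections yields $|\bigcup_{n\in\omega}Y_n|=\kappa^+$, that is, $\sum_{n\in\omega}\mathfrak b_n=\kappa^+$ as genuine cardinals. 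Since $\kappa^+$ is a regular uncountable aleph in $\mathcal V$, whereas a countable sum of cardinals each $\leqslant\kappa$ is at most $\kappa$, some $\mathfrak b_m$ must equal $\kappa^+$; in particular $Y_m$ is well-orderable of cardinality $\kappa^+$.

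The heart of the argument is to derive from this a surjection from $A_m$ onto $A_m\cup\kappa^+$, contradicting Lemma~\ref{sh23}. For the chosen $m$, the relation $\mathfrak c_m=^\ast\mathfrak a_m+\mathfrak b_m$ provides a surjection pair $\langle\phi,\psi\rangle$ between $A_m$ and $X_m\cup Y_m$. The sets $\phi^{-1}[X_m]$ and $\phi^{-1}[Y_m]$ are disjoint subsets of $A_m$: the second is mapped by $\phi$ onto $Y_m$, while the first is mapped onto $X_m$ and hence, after composing with $\psi$, onto $\psi[X_m]$. Setting $N=\psi[Y_m]$, surjectivity of $\psi$ gives $A_m=\psi[X_m]\cup N$, so $\psi[X_m]\supseteq A_m\setminus N$. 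Because the two preimages are disjoint I can glue these maps into a single partial surjection from $A_m$ onto $(A_m\setminus N)\cup Y_m$. Finally, $N$ is an image of $Y_m$, hence well-orderable of cardinality $\leqslant\kappa^+$, so there is a bijection in $\mathcal V$ from $Y_m$ onto $N\cup\kappa^+$; applying it on the $Y_m$-part and the identity on $A_m\setminus N$ upgrades the map to a partial surjection from $A_m$ onto $A_m\cup\kappa^+$, the desired contradiction.

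I expect the main obstacle to be exactly this last construction. The naive composites---$\psi\circ\phi$ onto $A_m$ and $\phi$ onto $Y_m$---have overlapping domains and so cannot simply be unioned; the real point is to route the two outputs through the \emph{disjoint} preimages $\phi^{-1}[X_m]$ and $\phi^{-1}[Y_m]$ and then to absorb the well-orderable ``defect'' $N$ into the block $\kappa^+$. Everything else reduces either to cardinal arithmetic of alephs inside $\mathcal V$ or to routine checks that the functions produced are invariant under enough of $\mathcal G$ to belong to $\mathcal V$.
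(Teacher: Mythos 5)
Your proof is correct, and it follows the paper's skeleton almost exactly: the same level-shift surjection $\langle\alpha,i,n\rangle\mapsto\langle\alpha,i,n-1\rangle$ (resp.\ $\alpha$ at level $0$) for the positive half, the same reduction of the negative half to ``$\mathfrak{b}_m=\kappa^+$ for some $m$'' (well-ordering $\bigcup_{n\in\omega}Y_n$ by least preimages, then using that a countable sum of alephs $\leqslant\kappa$ stays below $\kappa^+$, which is available since $\mathsf{DC}_\kappa$ holds in $\mathcal{V}$), and the same final target: a surjection from $A_m$ onto $A_m\cup\kappa^+$, contradicting Lemma~\ref{sh23}. The only divergence is the endgame. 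The paper stays at the level of cardinal arithmetic, computing $\mathfrak{c}_m=^\ast\mathfrak{a}_m+\kappa^+=\mathfrak{a}_m+\kappa^++\kappa^+=^\ast\mathfrak{c}_m+\kappa^+$ (using the absorption $\kappa^+=\kappa^++\kappa^+$ and the fact that $=^\ast$ is a congruence for finite addition) and then invokes Lemma~\ref{sh23}; you instead unfold this one-line computation into explicit function surgery on the surjection pair $\langle\phi,\psi\rangle$, absorbing the well-orderable defect $N=\psi[Y_m]$ into $\kappa^+$ via $|N|+\kappa^+=\kappa^+$. These are the same absorption argument in two guises: the paper's version is shorter and exploits the algebra of $=^\ast$, while yours makes visible exactly which maps witness the contradiction, at the cost of the bookkeeping you yourself flag. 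Two micro-repairs in that bookkeeping: the composite $\psi\circ\phi$ restricted to $\phi^{-1}[X_m]$ has range $\psi[X_m]$, which may meet $N$, so you must further restrict its domain to the preimage of $A_m\setminus N$ before gluing with $\phi\rstr\phi^{-1}[Y_m]$, so that the glued map has range exactly $(A_m\setminus N)\cup Y_m$ (harmless, since a partial surjection restricts to one onto any subset of its range); and since Lemma~\ref{sh23} concerns total surjections, extend your partial surjection by sending the unused points of $A_m$ to a fixed element of $A_m\cup\kappa^+$. Neither point is a gap; both are routine.
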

\begin{proof}
First, the function $f$ on $A$ defined by
\[
f(\langle\alpha,i,n\rangle)=
\begin{cases}
\alpha                     & \text{if $n=0$,}\\
\langle\alpha,i,n-1\rangle & \text{if $n>0$,}
\end{cases}
\]
is a surjection from $A$ onto $A\cup\kappa^+$.
Clearly, $f$ is fixed by every permutation in $\mathcal{G}$, so $f\in\mathcal{V}$.
Hence, in $\mathcal{V}$, $\mathfrak{a}+\mathfrak{b}=^\ast\sum_{n\in\omega}\mathfrak{c}_n$.

Assume to the contrary that there are cardinals
$\mathfrak{a}_n,\mathfrak{b}_n$ ($n\in\omega$) in $\mathcal{V}$ such that
\[
\mathfrak{a}=^\ast\sum_{n\in\omega}\mathfrak{a}_n,\quad\mathfrak{b}=^\ast\sum_{n\in\omega}\mathfrak{b}_n,
\quad\text{and}\quad\mathfrak{c}_n=^\ast\mathfrak{a}_n+\mathfrak{b}_n\text{ for all }n\in\omega.
\]
Since $\sum_{n\in\omega}\mathfrak{b}_n=^\ast\mathfrak{b}=\kappa^+$,
it follows that $\sum_{n\in\omega}\mathfrak{b}_n=\kappa^+$,
which implies that $\mathfrak{b}_m=\kappa^+$ for some $m\in\omega$. Hence,
\[
\mathfrak{c}_m=^\ast\mathfrak{a}_m+\mathfrak{b}_m=\mathfrak{a}_m+\kappa^+=\mathfrak{a}_m+\kappa^++\kappa^+=^\ast\mathfrak{c}_m+\kappa^+,
\]
contradicting Lemma~\ref{sh23}.
\end{proof}

Now, the next theorem immediately follows from Lemmas~\ref{sh22} and~\ref{sh25},
along with a transfer theorem of Pincus \cite[Theorem~4]{Pincus1977}.

\begin{theorem}
It is consistent with $\mathsf{ZF}+\mathsf{DC}_\kappa$ that surjective cardinals do not form a cardinal algebra.
\end{theorem}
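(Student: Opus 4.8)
The plan is to read the result off the two preceding lemmas and then transport the consistency statement from $\mathsf{ZFA}$ down to $\mathsf{ZF}$ using Pincus's transfer machinery. First I would recall that the permutation model $\mathcal{V}$ built above is a model of $\mathsf{ZFA}$, and that Lemmas~\ref{sh22} and~\ref{sh25} together say precisely that, inside $\mathcal{V}$, the principle $\mathsf{DC}_\kappa$ holds while the refinement postulate (postulate~VI) fails for surjective cardinals. Since every cardinal algebra must in particular satisfy postulate~VI, this means that in $\mathcal{V}$ the surjective cardinals do not form a cardinal algebra. Hence $\mathsf{ZFA}+\mathsf{DC}_\kappa$ is consistent with the assertion that surjective cardinals do not form a cardinal algebra; equivalently, $\mathsf{ZFA}+\mathsf{DC}_\kappa$ does not prove that they do.

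The remaining, and genuinely load-bearing, step is the transfer from $\mathsf{ZFA}$ to $\mathsf{ZF}$, for which I would appeal to Pincus's transfer theorem \cite[Theorem~4]{Pincus1977}. Concretely, I would isolate the sentence $\Phi$ asserting ``there is a family of surjective cardinals witnessing the failure of refinement'' --- instantiated by $\mathfrak{a}=|A|$, $\mathfrak{b}=\kappa^+$, and $\mathfrak{c}_n=|A_n|$ as in Lemma~\ref{sh25} --- and verify that $\Phi$ falls within the class of statements that Pincus's theorem is designed to transport, namely the injectively boundable ones, while $\mathsf{DC}_\kappa$ is exactly the kind of choice principle his theorem permits one to carry along. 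Granting this, the theorem converts the permutation model into a genuine $\mathsf{ZF}$-model of $\mathsf{DC}_\kappa$ together with $\Phi$, which is the desired conclusion.

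I expect the main obstacle to be the bookkeeping needed to certify that $\Phi$ is of the appropriate (injectively boundable) form. The witnessing cardinals $|A|,\kappa^+,|A_n|$ are represented, via Scott's trick, by sets of bounded rank, and the universal clause ``no sequence $\langle\mathfrak{a}_n,\mathfrak{b}_n\rangle_{n\in\omega}$ refines the given decomposition'' quantifies only over sequences of surjective cardinals, which are again sets whose rank is controlled by the parameters. The task is to package these observations into the exact shape required by \cite[Theorem~4]{Pincus1977} and to confirm that adjoining $\mathsf{DC}_\kappa$ respects the boundedness constraints; once the syntactic form is matched, the transfer is automatic and the proof is complete.
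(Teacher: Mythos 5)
Your proposal is correct and takes essentially the same route as the paper, whose proof is exactly the observation that the theorem follows immediately from Lemmas~\ref{sh22} and~\ref{sh25} together with the transfer theorem of Pincus \cite[Theorem~4]{Pincus1977}. Your extra paragraph checking that the failure-of-refinement sentence is injectively boundable and that $\mathsf{DC}_\kappa$ may be carried along is precisely the verification the paper's one-line proof leaves implicit, so it is a welcome elaboration rather than a deviation.
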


\section{Concluding remarks}
To summarize, the article resolves the open question of whether surjective cardinals form a cardinal algebra,
and demonstrates that they indeed form a surjective cardinal algebra.
We conclude our article with some suggestions for further study.

In \cite{Tarski1949a}, Tarski gives a combinatorial proof, and in \cite{Schwartz2015},
Schwartz presents a game-theoretic proof of the Bernstein division theorem.
We wonder whether there are similar combinatorial or game-theoretic proofs
of the cancellation law for surjective cardinals (Corollary~\ref{sh26}).
We note that Tarski's combinatorial proof of the Bernstein division theorem relies
heavily on the refinement postulate for cardinals, suggesting that a combinatorial proof
of the cancellation law for surjective cardinals might be quite complex.

In \cite{Harrison2022}, Harrison-Trainor and Kulshreshtha give a complete axiomatization
of the logic of cardinality comparison without the axiom of choice.
It is worth replacing ``cardinality'' with ``surjective cardinality''
and exploring the corresponding complete axiomatization.

\end{document}